\documentclass[12pt]{amsart}
\usepackage{amssymb, verbatim}
\usepackage{amsthm}
\usepackage{epsfig,color}
\usepackage{enumerate}
\usepackage{color,url}
\usepackage{graphicx}
\usepackage{subcaption}
\usepackage{float}
\usepackage{placeins}
\usepackage{tikz}
\usepackage{xcolor}
\usepackage{cite}
\usetikzlibrary{arrows.meta}

\date{\today}

\newtheorem{theorem}{Theorem}[section]

\newtheorem{proposition}[theorem]{Proposition}
\theoremstyle{definition}

\theoremstyle{remark}
\newtheorem{remark}{Remark}

\makeatletter
\@namedef{subjclassname@2020}{%
 \textup{2020} Mathematics Subject Classification}
\makeatother

\numberwithin{equation}{section}

\usepackage[T1]{fontenc}

\newcommand{\hyper}[5]{\,{}_{#1}F_{#2}\left(\!\!%
\begin{array}{cc}{\displaystyle{#3}}\\[-0.1ex]
{\displaystyle{#4}} \end{array}\Big| \,{\displaystyle{#5}}
\right)}

\begin{document}
\title{Finite orthogonal polynomials on a cone}

\author[\"{O}.F. Et]{\"{O}mer Faruk Et}
\address[\"{O}.F. Et]{Ankara University, Graduate School of Natural and Applied Sciences, Department of Mathematics, 06110, D\i\c{s}kap\i, Ankara, T\"{u}rkiye; Karamanoglu Mehmetbey University, Faculty of Kamil \"{O}zdag Science, Department of Mathematics, 70100, Karaman,T\"{u}rkiye}
\email[\"{O}.F. Et]{omerfaruket68@gmail.com, omerfaruket@kmu.edu.tr}

\author[E. \c{C}ekirdek]{Esra \c{C}ek\.{I}rdek}
\address[E. \c{C}ekirdek]{Ankara University, Graduate School of Natural and Applied Sciences, Department of Mathematics, 06110, D\i\c{s}kap\i, Ankara, T\"{u}rkiye}
\email[E. \c{C}ekirdek]{ecekirdek@ankara.edu.tr}

\author[R. Akta\c{s} Karaman]{Rab\.{I}a  Akta\c{s} Karaman}
\address[R. Akta\c{s} Karaman]{Ankara University, Faculty of Science, Department of Mathematics, 06100, Tando\u{g}an, Ankara, T\"{u}rkiye}
\email[R. Akta\c{s} Karaman (corresponding author)]{raktas@science.ankara.edu.tr}

\thanks{}
\date{\today}
\subjclass[2010]{33C50, 33C45}
\keywords{Orthogonal polynomial, Laguerre polynomial, cone, differential equation,
recurrence relation}

\begin{abstract}
The aim of this paper is to study finite orthogonal polynomials on a cone of
revolution and its surface. We define two classes of finite orthogonal
polynomials on the solid cone and derive their corresponding differential
equations and recurrence relations. Furthermore, we demonstrate that, in the
limit case, one of these classes reduces to Laguerre polynomials on the cone.
Similarly, we establish two families of finite orthogonal polynomials on the
surface of the cone and analyze their respective properties.

\end{abstract}
\maketitle

\section{Introduction}

It is well known that the classical families of orthogonal polynomials,
namely the Jacobi, Laguerre, Hermite polynomials, are infinitely orthogonal
and satisfy a second order differential equation of the form \cite{Bochner}%
\begin{equation*}
\left( ax^{2}+bx+c\right) y_{n}^{\prime \prime }+\left( dx+e\right)
y_{n}^{\prime }-n\left( \left( n-1\right) a+d\right) y_{n}=0
\end{equation*}%
where the parameters $a,b,c,d,$ and $e$ are independent of $n$. Beyond the
classical infinitely orthogonal families, there also exist three families of
finitely orthogonal which arise as special solutions of the same
second order differential equation. Such families of hypergeometric
polynomials denoted by $M_{n}^{(p,q)},$ $N_{n}^{(p)}$ and $I_{n}^{(p)}$, are
finitely orthogonal with respect to the F sampling, inverse Gamma, and T
sampling distributions. Their application in functions approximation and
numerical integration have been studied in detail in \cite{M1}. Each family
maintains a close relationship with the Jacobi and Laguerre polynomials.
Notably, the first family is related to Jacobi polynomials, while the second family is directly connected to the generalized Bessel
polynomials and, consequently, to the Laguerre polynomials \cite{M1}. The
finite orthogonal polynomials are less known in the existing literature
compared to classical infinite sequences. But, over the past few years,
these polynomials have received significant attention in the literature. For
literature regarding finite orthogonal polynomials, refer to \cite{E1, EA,
GA, GAM1, GAM2, GCO, M2, MSA, SMA}.

Orthogonal polynomials in several variables play a fundamental role in
approximation theory, harmonic analysis, and spectral methods for partial
differential equations. While the classical orthogonal polynomials in one
variable such as Jacobi, Laguerre, and Hermite polynomials are well
understood, extending orthogonality to multivariate and non-product domains
remains a challenging and active area of research. In classical settings
such as the unit sphere, the unit ball, and the simplex, the existence of
explicit orthogonal bases, the Fourier series of orthogonal polynomials,
differential equations with polynomial coefficients, cubature formulae,
closed form reproducing kernels and approximation properties have led to a
rich and well developed theory (see \cite{DaiX, DX, PX, PX2, X1, X2}).

In recent years, orthogonal structures on and inside quadratic surfaces of
revolution have been studied, motivated by the theory of spherical harmonics
on the unit sphere and classical orthogonal polynomials on the unit ball
(see \cite{X3, X5, X6, X7, X8, X9}). In \cite{X3}, orthogonal structures and the
Fourier orthogonal series were studied on the solid cone
\begin{equation*}
{\mathbb{V}}^{d+1}=\left \{ (x,t):\Vert x\Vert \leq t,\, \,x\in \mathbb{R}%
^{d},\, \,0\leq t\leq b\right \}
\end{equation*}%
and on the surface of cone of revolution
\begin{equation*}
{\mathbb{V}}_{0}^{d+1}=\left \{ (x,t):\Vert x\Vert =t,\, \,x\in \mathbb{R}%
^{d},\, \,0\leq t\leq b\right \}
\end{equation*}%
where $b$ is a nonnegative real number. On the solid cone ${\mathbb{V}}%
^{d+1} $, two families of orthogonal polynomials, namely Jacobi and Laguerre
polynomials on the cone, are defined with respect to the weight function
\begin{equation*}
W_{\mu }(x,t)=w(t)(t^{2}-\Vert x\Vert ^{2})^{\mu -\frac{1}{2}},\quad \mu >-%
\tfrac{1}{2}
\end{equation*}%
where $w$ is the Jacobi weight or the Laguerre weight. It is shown that each
of these two families are eigenfunctions of a second order differential
operator with eigenvalues which depend on only total degree of the
polynomials. Analogous to the results for the solid cone, on the surface of
cone of revolution ${\mathbb{V}}_{0}^{d+1},$ two orthogonal families with
respect to $\rho \left( t\right) \mathrm{d}\sigma \left( x,t\right) $ were
defined where $\mathrm{d}\sigma $ denotes the surface measure of ${\mathbb{V}%
}_{0}^{d+1}$ and some properties were studied \cite{X3}. Subsequently, \cite%
{X5} explored Fourier orthogonal expansions using Laguerre-type weight
functions on the conic surface of revolution and the domain bounded by such
a surface. Further developments in \cite{X6} and \cite{X7} established
orthogonal polynomials and Fourier series on double cone, hyperboloid, and
paraboloid. More recently, \cite{X8} examined orthogonal polynomials for
weight functions over domains of revolution, deriving several new families.
A recent paper \cite{X9} analyzed orthogonal polynomials on a fully
symmetric planar domain that is generated by a certain triangle and their
approximation properties.

Motivated by these studies, the purpose of this paper is to study
multivariate finite orthogonal polynomials on a cone of revolution%
\begin{equation*}
{\mathbb{V}}^{d+1}=\left \{ (x,t):\Vert x\Vert \leq t,\, \,x\in \mathbb{R}%
^{d},\, \,0\leq t<\infty \right \}
\end{equation*}%
and also on the surface of the cone%
\begin{equation*}
{\mathbb{V}}_{0}^{d+1}=\left \{ (x,t):\Vert x\Vert =t,\, \,x\in \mathbb{R}%
^{d},\, \,0\leq t<\infty \right \}
\end{equation*}%
for $d\geq 2.$ There are a few papers bivariate finite orthogonal
polynomials in the literature. But for $d>2$, finite orthogonal structures
have, to the best of our knowledge, not yet been studied in the existing
literature. For the solid cone, the finite orthogonality is defined with
respect to the weight function $W_{\mu }(x,t)=w(t)(t^{2}-\Vert x\Vert
^{2})^{\mu -\frac{1}{2}},\quad \mu >-\tfrac{1}{2}$ where $w$ is a weight
function on ${\mathbb{R}}^{+}$ for the associated finite orthogonal
polynomial. On the surface on the cone, we introduce an orthogonality
relation defined with respect to the measure $\rho \left( t\right) \mathrm{d}%
\sigma \left( x,t\right) $ where $d\sigma $ is the surface measure of ${%
\mathbb{V}}_{0}^{d+1}.$ We will extend the existing framework by
establishing two classes of finite orthogonality structures on the solid
cone and on the surface of the cone drawing on univariate finite orthogonal
polynomials, spherical harmonics on the unit sphere and classical orthogonal
polynomials on the unit ball.

The paper is organized as follows. In Section 2, we recall necessary
preliminaries on classical finite orthogonal polynomials and multivariate
orthogonal polynomials on the unit ball and on the cone. Section 3 presents
two classes of finite orthogonal polynomials on the solid cone taking a cue
the first class $M_{n}^{(p,q)}\left( x\right) $ and second class $%
N_{n}^{(p)} $ of univariate finite orthogonal polynomials. It is shown that
the first finite class on the cone is a solution of second order
differential equation while the second finite class is a solution of second
order difference-differential equation. Also, recurrence relations for these
families are derived and it is shown that in the limit case, the first
finite family on the cone reduces to Laguerre polynomials on the cone. In
the last section, we derive two families of finite orthogonal polynomials on the
surface of the cone and we provide their some properties such as orthogonality property, differential
equation, and limit relation, showing that the first finite family on the surface of the cone reduces to the Laguerre
polynomials on the surface of the cone in the limit case.

\section{Preliminary}

\setcounter{equation}{0}

For our study of finite orthogonal polynomials in and on the cone, we shall
need various properties of spherical harmonics, orthogonal polynomials on
the unit ball and on the cone as well as the first and second class of
finite orthogonal polynomials on the interval $\left[ 0,\infty \right) .$ We shall
collect what will be needed in subsequent subsection.

\subsection{Finite orthogonal polynomials of one variable}

Let $w\left( x\right) $ be a nonnegative and an integrable function on the
interval $\left[ a,b\right] .$ Let $\varphi _{n}$ denote the orthogonal
polynomials of degree $n$ with respect to the weight function $w$. Then,%
\begin{equation*}
\left \langle \varphi _{n},\varphi _{m}\right \rangle _{w}=c_{w}\int
\limits_{a}^{b}\varphi _{n}\left( x\right) \varphi _{m}\left( x\right)
w\left( x\right) \mathrm{d}x=h_{n}\delta _{m,n},~m,n=0,1,\ldots
\end{equation*}%
where $\delta _{m,n}$ denotes Kronecker delta and $h_{n}$ is the norm square
of $\varphi _{n}$ and, $c_{w}$ is the normalization constant so that $\left
\langle 1,1\right \rangle _{w}=1.$ While the classical orthogonal polynomials
like Hermite, Laguerre, and Jacobi polynomials form infinite sequences,
finite sequences arise when the orthogonality relation only holds for a
finite number of terms, $n=0,1,...,N.$ The finite orthogonality is defined
under specific conditions imposed on the parameters of the weight function $%
w\left( x\right) .$\textbf{\ }The finite orthogonal polynomials $%
M_{n}^{\left( p,q\right) }\left( x\right) ,~N_{n}^{\left( p\right) }\left(
x\right) $ and $I_{n}^{\left( p\right) }\left( x\right) $ were studied by
Masjed-Jamei \cite{M1} in detail.

Here we recall two classes of finite orthogonal polynomials we will use
throughout the paper. The first class is the finite $M_{n}^{\left(
p,q\right) }\left( x\right) $ polynomials, and second class is the finite $%
N_{n}^{\left( p\right) }\left( x\right) $ polynomials.

\subsubsection{First Class of Finite OPs:}

For the first class $M_{n}^{\left( p,q\right) }\left( x\right) $ of finite
OPs$,$ the weight function is denoted by
\begin{equation*}
w_{p,q}(x)=\frac{x^{q}}{\left( 1+x\right) ^{p+q}},~0\leq x<\infty .
\end{equation*}%
Its normalization constant $c_{p,q},$ defined by $c_{p,q}\int \limits_{0}^{%
\infty }w_{p,q}(x)\mathrm{d}x=1,$ is given by
\begin{equation}
c_{p,q}=\frac{\Gamma \left( p+q\right) }{\Gamma \left( p-1\right) \Gamma
\left( q+1\right) };~~p>1,q>-1  \label{Def:normcons M}
\end{equation}
where $\Gamma \left( p\right) $ denotes the Gamma function defined by%
\begin{equation*}
\Gamma \left( p\right) =\int \limits_{0}^{\infty }x^{p-1}e^{-x}\mathrm{d}x,~%
 p >0.
\end{equation*}
The first class of the finite orthogonal polynomials $M_{n}^{\left(
p,q\right) }\left( x\right) $ is defined by the Rodrigues formula
\begin{equation}
M_{n}^{\left( p,q\right) }\left( x\right) =\left( -1\right) ^{n}\frac{\left(
1+x\right) ^{p+q}}{x^{q}}\frac{d^{n}\left( x^{n+q}\left( 1+x\right)
^{n-p-q}\right) }{dx^{n}},~n=0,1,2,...  \label{M-Rod}
\end{equation}%
and they are orthogonal with respect to the weight function $%
w_{p,q}$ satisfying%
\begin{equation*}
c_{p,q}\int \limits_{0}^{\infty }\frac{x^{q}}{\left( 1+x\right) ^{p+q}}%
M_{n}^{\left( p,q\right) }\left( x\right) M_{m}^{\left( p,q\right) }\left(
x\right) \mathrm{d}x=h_{n}^{\left( p,q\right) }\delta _{m,n},
\end{equation*}%
if and only if $p>2N+1,~q>-1,~N=\max \{m,n\}$ where%
\begin{equation}
h_{n}^{\left( p,q\right) }=\frac{n!\left(  p-n-1\right)  !\left(  p+q-1\right)  !\left(  q+1\right)
_{n}}{\left(  p-2\right)  !\left(  p-2n-1\right)  \left(  p+q-n-1\right)  !}.
\label{Def:norm M}
\end{equation}%
Some samples of these polynomials are as follows
\begin{equation*}
\left \{
\begin{array}{l}
M_{0}^{\left( p,q\right) }\left( x\right) =1 \\
M_{1}^{\left( p,q\right) }\left( x\right) =\left( p-2\right) x-\left(
q+1\right)  \\
M_{2}^{\left( p,q\right) }\left( x\right) =\left( p-4\right) \left(
p-3\right) x^{2}-2\left( p-3\right) \left( q+2\right) x+\left( q+2\right)
\left( q+1\right)  \\
M_{3}^{\left( p,q\right) }\left( x\right) =\left( p-6\right) \left(
p-5\right) \left( p-4\right) x^{3}-3\left( p-5\right) \left( p-4\right)
\left( q+3\right) x^{2} \\
\text{ \  \  \  \  \  \  \  \  \  \  \  \  \  \  \  \  \  \ }+3\left( p-4\right) \left(
q+3\right) \left( q+2\right) x-\left( q+3\right) \left( q+2\right) \left(
q+1\right)  \\
\vdots
\end{array}%
\right. .
\end{equation*}%
The polynomials $M_{n}^{\left( p,q\right) }\left( x\right) $ are solutions
of the differential equation
\begin{equation}
x\left( 1+x\right) y_{n}^{\prime \prime }\left( x\right) +\left( \left(
2-p\right) x+\left( 1+q\right) \right) y_{n}^{\prime }\left( x\right)
-n\left( n+1-p\right) y_{n}\left( x\right) =0.  \label{equ-M}
\end{equation}%
and the following recurrence relations are satisfied for $M_{n}^{\left(
p,q\right) }\left( x\right) $
\begin{eqnarray}
M_{n+1}^{\left( p,q\right) }\left( x\right)  &=&\left( \frac{\left( p-\left(
2n+1\right) \right) \left( p-\left( 2n+2\right) \right) }{\left( p-\left(
n+1\right) \right) }x\right.   \label{recurrenceM} \\
&&+\left. \frac{\left( p-\left( 2n+1\right) \right) \left( 2n\left(
n+1\right) -p\left( q+2n+1\right) \right) }{\left( p-\left( n+1\right)
\right) \left( p-2n\right) }\right) M_{n}^{\left( p,q\right) }\left(
x\right)   \notag \\
&&-\left( \frac{n\left( p-\left( 2n+2\right) \right) \left( p+q-n\right)
\left( q+n\right) }{\left( p-\left( n+1\right) \right) \left( p-2n\right) }%
\right) M_{n-1}^{\left( p,q\right) }\left( x\right)   \notag
\end{eqnarray}%
and
\begin{equation}
\frac{d}{dx}M_{n}^{\left( p,q\right) }\left( x\right) =n\left( p-\left(
n+1\right) \right) M_{n-1}^{\left( p-2,q+1\right) }\left( x\right) .
\label{Mderrel}
\end{equation}%
There is also a limit relation between $M_{n}^{\left( p,q\right) }\left(
x\right) $ and Laguerre polynomials. In this sense we can write%
\begin{equation*}
M_{n}^{\left( p,q\right) }\left( \frac{x}{p}\right) =\left( -1\right) ^{n}%
\frac{\left( 1+x/p\right) ^{p+q}}{x^{q}}\frac{d^{n}\left( x^{n+q}\left(
1+x/p\right) ^{n-p-q}\right) }{dx^{n}},
\end{equation*}%
from which, taking limit as $p\rightarrow \infty $ gives \cite{Tez}
\begin{equation}
\underset{p\rightarrow \infty }{\lim }M_{n}^{\left( p,q\right) }\left( \frac{%
x}{p}\right) =\left( -1\right) ^{n}n!L_{n}^{\left( q\right) }\left( x\right)  \label{limit}
.
\end{equation}

\subsubsection{Second Class of Finite OPs:}

For the second class $N_{n}^{(p)}(x)$ of finite OPs$,$ the weight function
is denoted by
\begin{equation*}
w_{p}(x)=x^{-p}e^{-\frac{1}{x}},~0\leq x<\infty .
\end{equation*}%
Its normalization constant $c_{p},$ defined by $c_{p}\int
\limits_{0}^{\infty }w_{p}(x)\mathrm{d}x=1,$ is given by
\begin{equation}
c_{p}=\frac{1}{\Gamma \left( p-1\right) },~p>1  \label{Def:normcons N}.
\end{equation}%
The second class of finite orthogonal polynomials $N_{n}^{(p)}(x)$ is
defined by the Rodrigues formula%
\begin{equation*}
N_{n}^{(p)}(x)=(-1)^{n}x^{p}e^{\frac{1}{x}}\frac{d^{n}\left( x^{-p+2n}e^{-%
\frac{1}{x}}\right) }{dx^{n}},~n=0,1,2,...
\end{equation*}%
and they are finitely orthogonal with respect to $w_{p}$ on the
interval $\left[ 0,\infty \right) $ if and only if $p>2N+1,$ $N=\max
\{m,n\}. $ Indeed,
\begin{equation*}
c_{p}\int \limits_{0}^{\infty }x^{-p}e^{-\frac{1}{x}%
}N_{n}^{(p)}(x)N_{m}^{(p)}(x)\mathrm{d}x=h_{n}^{\left( p\right) }\delta
_{m,n},
\end{equation*}%
is satisfied for $p>2N+1,$ $N=\max \{m,n\}$ where
\begin{equation}
h_{n}^{\left( p\right) }=\frac{n!\left( p-n-1\right) !}{\left( p-2n-1\right)
\Gamma \left( p-1\right) }.  \label{normN}
\end{equation}%
Some samples of these polynomials are given as follows
\begin{equation*}
\left \{
\begin{array}{l}
N_{0}^{\left( p\right) }\left( x\right) =1 \\
N_{1}^{\left( p\right) }\left( x\right) =\left( p-2\right) x-1 \\
N_{2}^{\left( p\right) }\left( x\right) =\left( p-4\right) \left( p-3\right)
x^{2}-2\left( p-3\right) x+1 \\
N_{3}^{\left( p\right) }\left( x\right) =\left( p-6\right) \left( p-5\right)
\left( p-4\right) x^{3}-3\left( p-5\right) \left( p-4\right) x^{2}+3\left(
p-4\right) x-1 \\
\vdots%
\end{array}%
\right. .
\end{equation*}%
The polynomials $N_{n}^{\left( p\right) }\left( x\right) $ are solutions of
the differential equation%
\begin{equation}
x^{2}y_{n}^{\prime \prime }\left( x\right) +\left( \left( 2-p\right)
x+1\right) y_{n}^{\prime }\left( x\right) -n\left( n+1-p\right) y_{n}\left(
x\right) =0  \label{Dif-N}
\end{equation}%
and they also satisfy the following recurrence relations \cite{M1}
\begin{eqnarray}
N_{n+1}^{\left( p\right) }\left( x\right) &=&\left( \frac{\left( p-\left(
2n+2\right) \right) \left( p-\left( 2n+1\right) \right) }{p-\left(
n+1\right) }x-\frac{p\left( p-\left( 2n+1\right) \right) }{\left( p-\left(
n+1\right) \right) \left( p-2n\right) }\right) N_{n}^{\left( p\right)
}\left( x\right)  \label{Nrecrel} \\
&&-\frac{n\left( p-\left( 2n+2\right) \right) }{\left( p-\left( n+1\right)
\right) \left( p-2n\right) }N_{n-1}^{\left( p\right) }\left( x\right)  \notag
\end{eqnarray}%
and
\begin{equation}
\frac{d}{dx}N_{n}^{\left( p\right) }\left( x\right) =n\left( p-\left(
n+1\right) \right) N_{n-1}^{\left( p-2\right) }\left( x\right) .
\label{Nderrel}
\end{equation}

\subsection{Spherical Harmonics}

Let $\Delta $ be the Laplace operator with $\Delta =\left( \partial
/\partial x_{1}\right) ^{2}+\ldots +\left( \partial /\partial x_{d}\right)
^{2}$ of $\mathbb{R}^{d}$ and let $\mathcal{P}_{n}^{d}$ denote the space
homogenous polynomials of $n$ in $d$ variables. A spherical harmonic $P$ of
degree $n$ is a element of $\mathcal{P}_{n}^{d}$ which satisfies $\Delta P=0$
and for $n=0,1,\ldots ,$ and $\mathcal{H}_{n}^{d}$ denotes the space of
spherical harmonics
\begin{equation*}
\mathcal{H}_{n}^{d}=\left \{ P\in \mathcal{P}_{n}^{d}:\Delta P=0\right \} .
\end{equation*}%
Then $\dim \mathcal{P}_{n}^{d}=\binom{n+d-1}{n}$ and
\begin{equation*}
\dim \mathcal{H}_{n}^{d}=\dim \mathcal{P}_{n}^{d}-\dim \mathcal{P}_{n-2}^{d}=%
\binom{n+d-1}{n}-\binom{n+d-3}{n-2}.
\end{equation*}%
If $Y\in \mathcal{P}_{n}^{d},$ then it can be written as
\begin{equation*}
Y\left( x\right) =\left \Vert x\right \Vert ^{n}Y\left( x^{\prime }\right)
,~x^{\prime }=x/\left \Vert x\right \Vert \in \mathbb{S}^{d-1},
\end{equation*}%
which shows that $Y$ is determined by its restriction on $\mathbb{S}^{d-1}.$
Spherical harmonics of different degree form an orthogonal family on the
unit sphere $\mathbb{S}^{d-1}:=\left \{ x:\text{ }\left \Vert x\right \Vert
=1\right \} $. For $n\in \mathbb{N}_{0}$, assume that the set $\left \{
Y_{l}^{n}:1\leq l\leq \mathcal{H}_{n}^{d}\right \} $ is an orthonormal basis
of $\mathcal{H}_{n}^{d}.$ Then it follows
\begin{equation*}
\frac{1}{\varpi _{d}}\int_{\mathbb{S}^{d-1}}Y_{l}^{n}\left( x\right)
Y_{l^{\prime }}^{m}\left( x\right) \mathrm{d}\sigma \left( x\right) =\delta
_{n,m}\delta _{l,l^{\prime }}
\end{equation*}%
where $\varpi _{d}$ denotes the surface area of $\mathbb{S}^{d-1}$ and is
given by%
\begin{equation*}
\varpi _{d}:=\int_{\mathbb{S}^{d-1}}\mathrm{d}\sigma =\frac{2\pi ^{d/2}}{%
\Gamma \left( d/2\right) }.
\end{equation*}%
Spherical harmonics are eigenfunctions of the differential equation \cite[%
1.4.9]{DaiX}%
\begin{equation*}
\Delta _{0}Y=-n\left( n+d-2\right) Y,~~Y\in \mathcal{H}_{n}^{d},
\end{equation*}%
where $\Delta _{0}$ denotes the Laplace-Beltrami operator on the sphere,
that is, the restriction of the Laplacian $\Delta $ to the unit sphere.

\subsection{OPs on the unit ball}

Let ${\mathsf{w}}_{\mu }$ denote the weight function
\begin{equation*}
{\mathsf{w}}_{\mu }(x)=(1-\Vert x\Vert ^{2})^{\mu -\frac{1}{2}},\quad \mu >-%
\tfrac{1}{2},
\end{equation*}%
on the unit ball ${\mathbb{B}}^{d}=\{x\in {\mathbb{R}}^{d}:\Vert x\Vert \leq
1\}.$ Classical orthogonal polynomials on the unit ball ${\mathbb{B}}^{d}$
are orthogonal with respect to the inner product%
\begin{equation*}
\left \langle f,g\right \rangle _{\mu }=b_{\mu }^{\mathbb{B}}\int_{\mathbb{B}%
^{d}}f\left( x\right) g\left( x\right) {\mathsf{w}}_{\mu }(x)\mathrm{d}x,
\end{equation*}%
where $b_{\mu }^{\mathbb{B}}$ is the normalization constant so that $\left
\langle 1,1\right \rangle _{\mu }=1.$ The normalization constant $b_{\mu }^{%
\mathbb{B}}$ is given by
\begin{equation}
b_{\mu }^{\mathbb{B}}=\frac{1}{\int_{{\mathbb{B}}^{d}}{\mathsf{w}}_{\mu }(x)%
\mathrm{d}x}=\frac{\Gamma (\mu +\frac{d+1}{2})}{\pi ^{\frac{d}{2}}\Gamma
(\mu +\frac{1}{2})}  \label{norm-ball}.
\end{equation}

For the specific case $d=1$, the associated orthogonal polynomials reduce to the classical Gegenbauer polynomials, $C_{m}^{\left( \mu\right) }$. Following \cite[p. 277, Eq. (4)]{25}, these are defined by the Gauss hypergeometric function as
\begin{equation}\label{hyper}
C_{m}^{\left(  \mu \right)  }\left(  x\right)  =\frac{\left(
2\mu\right)  _{m}}{m!} \,  \hyper{2}{1}{-m,m+2\mu}{\mu+\frac{1}{2}}{\frac{1-x}{2}},
\end{equation}
here, $_{2}F_{1}$ is the special case ($p=2,q=1$) of the generalized hypergeometric function $_{p}F_{q}$, which is given by
\begin{equation}
\hyper{p}{q}{\alpha_{1},\alpha_{2},\dots,\alpha_{p}}{\beta_{1},\ \beta_{2},\dots,\beta_{q}}{x}=
{\displaystyle \sum \limits_{m=0}^{\infty}}
\frac{\left(  \alpha_{1}\right)  _{m}\left(  \alpha_{2}\right)  _{m} \dots \left(
\alpha_{p}\right)  _{m}}{\left(  \beta_{1}\right)  _{m}\left(  \beta_{2}\right)
_{m}\dots \left(  \beta_{q}\right)  _{m}}\frac{x^{m}}{m!}\label{genhyper}.
\end{equation}
 The coefficients are expressed using the Pochhammer symbol, $\left(\alpha \right)_{m}={\displaystyle \prod \limits_{j=0}^{m-1}}\left(  \alpha+j\right) $ for $m\geq1$, with $\left(\alpha \right)_{0}=1$, where $\alpha$ is a real or complex number. The Gegenbauer polynomials are orthogonal with respect to the weight function $w(x)=\left(  1-x^{2}\right)^{\mu-\frac{1}{2}}$ over the interval $[-1,1]$. Specifically, the orthogonality relation is given by \cite[p.281, Eq. (28)]{25}
\begin{equation}
 \int \limits_{-1}^{1}
C_{m}^{\left(  \mu \right)  }\left(  x\right)  C_{n}^{\left(
\mu \right)  }\left(  x\right)\left(  1-x^{2}\right)  ^{\mu-\frac{1}{2}}dx=\frac{\left(  2\mu \right)  _{m}\Gamma \left(  \mu
+\frac{1}{2}\right)  \Gamma \left(  \frac{1}{2}\right)  }{m!\left(
m+\mu \right)  \Gamma \left( \mu \right)  }~\delta_{m,n}%
,\label{ort} \quad m,n=0,1,....
\end{equation}

Let ${\mathcal{V}}_{n}^{d}({\mathsf{w}}_{\mu })$ be the space of orthogonal
polynomials of degree $n$ for the weight function ${\mathsf{w}}_{\mu }$. Then we have
\begin{equation*}
\dim {\mathcal{V}}_{n}^{d}=\binom{n+d-1}{n}.
\end{equation*}%
Several explicit orthogonal bases of ${\mathcal{V}}_{n}^{d}({\mathsf{w}}%
_{\mu })$ can be explicitly given in terms of classical orthogonal
polynomials of one variable; we refer the reader to \cite[Chapter 5]{DX} for further details. We can index a basis
of ${\mathcal{V}}_{n}^{d}({\mathsf{w}}_{\mu })$ by $\left \{ P_{\mathbf{k}%
}^{n}:\left \vert \mathbf{k}\right \vert =n,~\mathbf{k\in }\mathbb{N}%
_{0}^{d}\right \} $. These polynomials are eigenfunctions of second order
differential equation for $u\in {\mathcal{V}}_{n}^{d}({\mathsf{w}}_{\mu })$
\begin{equation}
\left( \Delta -\left \langle x,\bigtriangledown \right \rangle ^{2}-\left(
2\mu +d-1\right) \left \langle x,\bigtriangledown \right \rangle \right)
u=-n\left( n+2\mu +d-1\right) u,  \label{diffop}
\end{equation}%
where $\bigtriangledown $ denotes the gradient operator.

An orthogonal basis for this space can be given in terms of Jacobi
polynomials and spherical harmonics. For $0\leq m\leq n/2,$ let $\left \{
Y_{l}^{n-2m}:1\leq l\leq \dim \mathcal{H}_{n-2m}^{d}\right \} $ be an
orthonormal basis of $H_{n-2m}^{d}$ and define\textbf{\ }%
\begin{equation*}
P_{l,m}^{n}\left( x\right) =P_{m}^{\left( \mu -1/2,n-2m+\frac{d-2}{2}\right)
}\left( 2\left \Vert x\right \Vert ^{2}-1\right) Y_{l,n-2m}\left( x\right) ,
\end{equation*}%
where the Jacobi polynomial $P_{n}^{({\alpha },{\beta })}$ is given by, for $%
{\alpha },{\beta }>-1$,
\begin{equation*}
P_{n}^{{\alpha },{\beta }}(t)=\frac{({\alpha }+1)_{n}}{n!}{}_{2}F_{1}\left(
\begin{matrix}
-n,n+{\alpha }+{\beta }+1 \\
{\alpha }+1%
\end{matrix}%
;\frac{1-t}{2}\right)
\end{equation*}%
in terms of the hypergeometric function and it satisfies the orthogonal
relation \cite{Sz}
\begin{equation*}
\frac{c_{{\alpha },{\beta }}}{2^{{\alpha }+{\beta }+1}}\int_{-1}^{1}P_{n}^{({%
\alpha },{\beta })}(t)P_{m}^{({\alpha },{\beta })}(t)(1-t)^{{\alpha }}(1+t)^{%
{\beta }}\mathrm{d}t=h_{n}^{({\alpha },{\beta })}\delta _{n,m},
\end{equation*}%
where $c_{{\alpha },{\beta }}=\frac{\Gamma ({\alpha }+{\beta }+2)}{\Gamma ({%
\alpha }+1)\Gamma ({\beta }+1)}$ and the norm square is given by
\begin{equation*}
h_{n}^{({\alpha },{\beta })}=\frac{({\alpha }+1)_{n}({\beta }+1)_{n}({\alpha
}+{\beta }+n+1)}{n!({\alpha }+{\beta }+2)_{n}({\alpha }+{\beta }+2n+1)}.
\end{equation*}%
Then, the set of the polynomials $\left \{ P_{l,m}^{n}:0\leq m\leq
n/2,~1\leq l\leq \dim \mathcal{H}_{n-2m}^{d}\right \} $ is an orthogonal
basis of ${\mathcal{V}}_{n}^{d}({\mathsf{w}}_{\mu }).$ For other explicit
orthogonal bases of ${\mathcal{V}}_{n}^{d}({\mathsf{w}}_{\mu }),$ refer to
\cite{DX}.

\subsection{OPs in and on a cone}

In this subsection, we recall the infinitely orthogonal polynomials,
especially Laguerre polynomials, on the solid cone and on the surface of the
cone studied in \cite{X3}.

\subsubsection{OPs on the solid cone}

Let $W_{\mu }(x,t)$ denote the weight function%
\begin{equation*}
W_{\mu }(x,t)=w(t)(t^{2}-\Vert x\Vert ^{2})^{\mu -\frac{1}{2}},\quad \mu >-%
\tfrac{1}{2}
\end{equation*}%
on the cone ${\mathbb{V}}^{d+1}=\left \{ (x,t):\Vert x\Vert \leq t,\, \,x\in
\mathbb{R}^{d},\, \,0\leq t\leq b\right \} $ where $b$ is a nonnegative real
number and $w$ is a weight function on ${\mathbb{R}^{+}}$.
Orthogonal polynomials on the solid cone ${\mathbb{V}}^{d+1}$ are orthogonal
with respect to the inner product
\begin{equation*}
{\langle }f,g{\rangle }_{\mu }:=b_{\mu }\int_{{\mathbb{V}}%
^{d+1}}f(x,t)g(x,t)W_{\mu }(x,t)\mathrm{d}x\mathrm{d}t,
\end{equation*}%
where $b_{\mu }$ is a normalization constant so that ${\langle }1,1{\rangle }%
_{\mu }=1$ and
\begin{equation}
b_{\mu }=b_{\mu }^{{\mathbb{B}}}\times b_{\mu }^{w}\quad \hbox{with}\quad
b_{\mu }^{w}=\frac{1}{\int_{0}^{\infty }t^{d+2\mu -1}w(t)\mathrm{d}t}.
\label{nor-cone}
\end{equation}%
This can be easily verified by using the separation of variables
\begin{equation*}
\int_{{\mathbb{V}}^{d+1}}f(x,t)W_{\mu }(x,t)\mathrm{d}x\mathrm{d}%
t=\int_{0}^{\infty }\int_{{\mathbb{B}}^{d}}f(ty,t)(1-\Vert y\Vert ^{2})^{\mu
-\frac{1}{2}}\mathrm{d}y\,t^{d+2\mu -1}w(t)\mathrm{d}t.
\end{equation*}%
Let ${\mathcal{V}}_{n}({\mathbb{V}}^{d+1},W_{\mu })$ be the space of
orthogonal polynomials of degree $n$ in $d+1$ variables with respect to the
inner product ${\langle }f,g{\rangle }_{\mu }$. A basis for this space can
be constructed using orthogonal polynomials on the unit ball together with a
family of univariate orthogonal polynomials.

Let ${\mathbb{P}}_{m}=\{P_{{\mathbf{k}}}:|{\mathbf{k}}|=m\}$ denote a basis
of ${\mathcal{V}}_{m}({\mathbb{B}}^{d},{\mathsf{w}}_{\mu })$ for $m\leq n$
and let $q_{n-m}^{{\alpha }}$ be an orthogonal polynomial in one variable
with respect to the weight function $t^{{\alpha }}w(t)$ on ${\mathbb{R}^{+}}$. Define
\begin{equation}
{\mathsf{Q}}_{{\mathbf{k}},n}(x,t)=q_{n-m}^{{2m+2\alpha }}(t)t^{m}P_{{%
\mathbf{k}}}\! \left( \frac{x}{t}\right) ,\qquad |{\mathbf{k}}|=m,\quad
0\leq m\leq n,  \label{eq:sQcone}
\end{equation}%
where ${\alpha }=\mu +\frac{d-1}{2}$. Then ${\mathbb{Q}}_{n}=\{{\mathsf{Q}}_{%
{\mathbf{k}},n}:|{\mathbf{k}}|=m,\,0\leq m\leq n\}$ forms a basis of ${%
\mathcal{V}}_{n}({\mathbb{V}}^{d+1},W_{\mu })$. One example of such
polynomials is Laguerre polynomials on the cone defined by (see \cite{X3})
\begin{equation*}
{\mathsf{L}}_{{\mathbf{k}},n}^{\beta ,\mu }(x,t)=L_{n-m}^{2m+2\mu +{\beta }%
+d-1}(t)t^{m}P_{{\mathbf{k}}}^{m}\left( \frac{x}{t}\right) ,\quad |{\mathbf{k%
}}|=m,\, \,0\leq m\leq n
\end{equation*}%
where the Laguerre polynomial $L_{n}^{\alpha }$ is defined by the Rodrigues
formula, for ${\alpha }>-1$,
\begin{equation*}
L_{n}^{\alpha }(t)=\frac{x^{-\alpha }e^{x}}{n!}\frac{d^{n}}{dx^{n}}%
\left( x^{n+\alpha }e^{-x}\right)
\end{equation*}%
and it satisfies the orthogonal relation
\begin{equation*}
\frac{1}{\Gamma ({\alpha }+1)}\int_{0}^{\infty }L_{n}^{\alpha
}(t)L_{m}^{\alpha }(t)t^{\alpha }e^{-t}\mathrm{d}t=\frac{({\alpha }+1)_{n}}{%
n!}\delta _{m,n}.
\end{equation*}%
The polynomials ${\mathsf{L}}_{{\mathbf{k}},n}^{\beta ,\mu }(x,t)$ are
orthogonal with respect to the weight funtion%
\begin{equation*}
W_{{\beta },\mu }(x,t)=(t^{2}-\Vert x\Vert ^{2})^{\mu -\frac{1}{2}}t^{\beta
}e^{-t},\quad \mu >-\tfrac{1}{2},\quad {\beta }>-d,
\end{equation*}%
on the unbounded cone%
\begin{equation*}
{\mathbb{V}}^{d+1}=\left \{ (x,t):\Vert x\Vert \leq t,\, \,x\in {\mathbb{R}}%
^{d},\, \,0\leq t<\infty \right \} .
\end{equation*}%
For $\beta =0,$ the Laguerre polynomials on the cone are also eigenfunctions
of the following second order linear PDE with eigenvalues which depend only
on the degree of the polynomials%
\begin{equation}
\mathcal{D}_{\mu }u=-nu \label{equ-Laguerre}
\end{equation}%
where $D_{\mu }$\textbf{\ }denotes\textbf{\ }the second order linear
differential operator
\begin{equation*}
D_{\mu }=t(\Delta _{x}+\partial _{t}^{2})+2\left \langle x,\nabla _{x}\right
\rangle \partial _{t}-\left \langle x,\nabla _{x}\right \rangle +\left( 2\mu
+d-t\right) \partial _{t},~\mu >-1/2,
\end{equation*}%
where $\bigtriangledown _{x}$ and $\Delta _{x}$ denote the operators acting
on the $x$ variable.

\subsubsection{OPs on the surface of the cone}

Orthogonal polynomials on the surface of the cone $\mathbb{V}_{0}^{d+1}$,
denoted by%
\begin{equation*}
\mathbb{V}_{0}^{d+1}:=\left \{ \left( x,t\right) \in \mathbb{V}^{d+1}:\left
\Vert x\right \Vert =t,\text{ }0\leq t\leq b\right \}
\end{equation*}%
are defined with respect to a bilinear form given by%
\begin{equation*}
\left \langle f,g\right \rangle _{w}:=b_{w}\int_{\mathbb{V}%
_{0}^{d+1}}f\left( x,t\right) g\left( x,t\right) w\left( t\right) \mathrm{d}%
\sigma \left( x,t\right)
\end{equation*}%
where $w\left( t\right) $ is a nonnegative function on the interval $\left[
0,\infty \right) $ so that $\int \limits_{0}^{\infty }t^{d-1}w\left(
t\right) \mathrm{d}t<\infty $, $\mathrm{d}\sigma $ is the Lebesgue measure
on $\mathbb{V}_{0}^{d+1}$ and $b_{w}$ denotes a normalization constant
defined such that $\left \langle 1,1\right \rangle =1.$ The bilinear form ${%
\langle }.,.{\rangle }_{w}$ is an inner product on the space $\mathbb{R}%
\left[ x,t\right] /\left \langle \left \Vert x\right \Vert ^{2}-t^{2}\right
\rangle $ where we denote by $\mathbb{R}\left[ x,t\right] /\left \langle
q\right \rangle $ the space of polynomials in the variables $x$ and $t$
considered modulo the ideal $\left \langle q\right \rangle $ generated by $q.
$ Let ${\mathcal{V}}_{n}({\mathbb{V}}_{0}^{d+1},w)$ be the space of
orthogonal polynomials with respect to the inner product ${\langle }.,.{%
\rangle }_{w}.$ Then,
\begin{equation*}
\dim {\mathcal{V}}_{n}({\mathbb{V}}_{0}^{d+1},w)=\binom{n+d-1}{n}+\binom{%
n+d-2}{n-1}.
\end{equation*}%
As in the case of the solid cone, it is possible to define families of
orthogonal polynomials on the surface of the cone. One example of such
polynomials is Laguerre polynomials on the surface of the cone. For the
choose $b=\infty $ and $w_{\beta }\left( t\right) =t^{\beta }e^{-t},~\beta
>-d,$ consider the inner product on $\mathbb{R}\left[ x,t\right] /\left
\langle \left \Vert x\right \Vert ^{2}-t^{2}\right \rangle $%
\begin{equation*}
\left \langle f,g\right \rangle _{\beta }:=b_{\beta }\int_{\mathbb{V}%
_{0}^{d+1}}f\left( x,t\right) g\left( x,t\right) t^{\beta }e^{-t}\mathrm{d}%
\sigma \left( x,t\right) .
\end{equation*}%
Assume that the set $\left \{ Y_{l}^{m}:1\leq l\leq \mathcal{H}%
_{m}^{d}\right \} $ is an orthonormal basis of $\mathcal{H}_{m}^{d}.$ In
this case, an orthogonal basis for ${\mathcal{V}}_{n}({\mathbb{V}}%
_{0}^{d+1},w_{\beta })$ can be expressed in terms of Laguerre polynomials by%
\begin{equation*}
S_{n,m,l}^{\beta ,L}\left( x,t\right) =L_{n-m}^{2m+\beta +d-1}\left(
t\right) Y_{l}^{m}\left( x\right) ,~0\leq m\leq n,~1\leq l\leq \dim \mathcal{%
H}_{m}^{d}.
\end{equation*}%
For $\beta =-1,$ these polynomials are eigenfunctions of the second order
differential equation%
\begin{equation}
\left( t\partial _{t}^{2}+\left( d-1-t\right) \partial _{t}+t^{-1}\Delta
_{0}^{\left( x\right) }\right) u=-nu \label{sur-denk}
\end{equation}%
where $\Delta _{0}^{\left( x\right) }$ denotes the Laplace-Beltrami operator
in variable $x\in \mathbb{S}^{d-1},~d\geq 2$ (see \cite{X3}).

\section{Finite orthogonal polynomials in and on a cone}

The finite orthogonal structures in a solid cone will be considered in the
first subsection and the finite structures on the surface of the cone will be
presented in the second subsection.

\subsection{Finite orthogonal polynomials on a solid cone}

On the cone $\mathbb{V}^{d+1},$ orthogonal polynomials are orthogonal with
respect to the weight function%
\begin{equation}
W_{\mu }(x,t)=w(t)(t^{2}-\Vert x\Vert ^{2})^{\mu -\frac{1}{2}},\quad \mu >-%
\tfrac{1}{2}  \label{eq:Wmu-cone}
\end{equation}%
where $w$ is a weight function on ${\mathbb{R}}^{+}$ and these polynomials
satisfy the inner product
\begin{equation*}
{\langle }f,g{\rangle }_{\mu }:=b_{\mu }\int_{{\mathbb{V}}%
^{d+1}}f(x,t)g(x,t)W_{\mu }(x,t)\mathrm{d}x\mathrm{d}t,
\end{equation*}%
where $b_{\mu }$ is a normalization constant so that ${\langle }1,1{\rangle }%
_{\mu }=1.$ The normalization constant $b_{\mu }$ is given as in (\ref%
{nor-cone}).

On the cone $\mathbb{V}^{d+1}$, we investigate finite orthogonal polynomial
systems associated with the weight function $W_{\mu }(x,t)$, where two different choices of the function $w(t)$ are considered. In the
first setting, taking $w_{p,q}\left(t\right) =\frac{t^{q}}{\left( 1+t\right) ^{p+q}}$, we obtain the first finite family on the cone. In the second setting, choosing $w_{p}\left( t\right) =t^{-p}e^{-1/t}$, we derive the second finite family on the cone. While the first family consists of eigenfunctions of a second order differential operator whose corresponding eigenvalues depend only on the total degree of the polynomials, the second family satisfies a difference-differential equation.

\subsection{First finite class on the cone}

We consider the weight function
\begin{equation*}
W_{p,q,\mu }\left( x,t\right) =(t^{2}-\Vert x\Vert ^{2})^{\mu -\frac{1}{2}}%
\frac{t^{q}}{\left( 1+t\right) ^{p+q}}
\end{equation*}%
on the solid cone ${\mathbb{V}}^{d+1}$ and the corresponding inner product
defined by
\begin{equation*}
\left \langle f,g\right \rangle _{p,q,\mu }=b_{p,q,\mu }^{M}\int_{{\mathbb{V}%
}^{d+1}}f\left( x,t\right) g\left( x,t\right) W_{p,q,\mu }\left( x,t\right)
\mathrm{d}x\mathrm{d}t,
\end{equation*}%
where $b_{p,q,\mu }^{M}$ is the normalization constant and it is defined by $%
b_{p,q,\mu }^{M}=b_{\mu }^{\mathbb{B}}\times c_{p-2\mu -d+1,q+2\mu +d-1},$ where $%
c_{p,q}$ and $b_{\mu }^{\mathbb{B}}$ are defined by (\ref{Def:normcons M})
and (\ref{norm-ball}), respectively. Let ${\mathcal{V}}_{n}({\mathbb{V}}%
^{d+1},W_{p,q,\mu })$ be the space of orthogonal polynomials of degree $n$
with respect to the inner product $\left \langle .,.\right \rangle _{p,q,\mu
}. $ We can build the first class of finite polynomials on the cone in terms
of the first class of the univariate finite orthogonal polynomials $%
M_{n}^{(p,q)}(x)$ and orthogonal polynomials on the unit ball as follows:

\begin{proposition}
\label{Mpoly_def} Let ${\mathbb{P}}_{m}=\{P_{\mathbf{k}}:|{\mathbf{k}}|=m\}$
be an orthonormal basis of ${\mathcal{V}}_{m}^{d}({\mathbb{B}}^{d},{\mathsf{w%
}}_{\mu })$ for $m\leq n$ . Let $\alpha =\mu +\frac{d-1}{2}.$ Define%
\begin{equation}
M_{\mathbf{k,}n}^{\left( p,q,\mu \right) }\left( x,t\right) =M_{n-m}^{\left(
p-2\alpha -2m,q+2\alpha +2m\right) }\left( t\right) t^{m}P_{\mathbf{k}%
}^{m}\left( \frac{x}{t}\right) ,\text{\quad }\left \vert \mathbf{k}\right
\vert =m,~0\leq m\leq n.  \label{polM on the cone}
\end{equation}%
Then $\left \{ M_{\mathbf{k,}n}^{\left( p,q,\mu \right) }\left( x,t\right)
:\left \vert \mathbf{k}\right \vert =m,~\,0\leq m\leq n\right \} $ is an
orthogonal basis of ${\mathcal{V}}_{n}^{d}(\mathbb{V}^{d+1},W_{p,q,\mu })$
and it satisfies the orthogonality relation%
\begin{equation*}
\left \langle M_{\mathbf{k,}n}^{\left( p,q,\mu \right) },M_{\mathbf{k}%
^{\prime },n^{\prime }}^{\left( p,q,\mu \right) }\right \rangle _{p,q,\mu
}=H_{m,n}^{\left( p,q,\mu \right) }\delta _{n,n^{\prime }}\delta
_{m,m^{\prime }}\delta _{\mathbf{k},\mathbf{k}^{\prime }}
\end{equation*}%
under the restrictions $p>2N+2\mu +d$ and $q>-2\mu -d,~N=\max \left \{
n,n^{\prime }\right \} $ where $H_{m,n}^{\left( p,q,\mu \right) }$ denotes
the norm square of $M_{\mathbf{k,}n}^{\left( p,q,\mu \right) }$ given by
\begin{equation*}
H_{m,n}^{\left( p,q,\mu \right) }=\frac{c_{p-2\alpha ,q+2\alpha }}{%
c_{p-2\alpha -2m,q+2\alpha +2m}}h_{n-m}^{\left( p-2\alpha -2m,q+2\alpha
+2m\right) },
\end{equation*}%
where $c_{p,q}$ and $h_{n}^{(p,q)}$ are defined as in (\ref{Def:normcons M})
and (\ref{Def:norm M}), respectively.
\end{proposition}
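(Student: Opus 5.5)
The plan is to reduce everything to the univariate orthogonality of $M_{n}^{(p,q)}$ and the orthonormality of the ball basis, using the separation of variables $x=ty$ recalled after (\ref{nor-cone}).

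\textbf{Polynomiality.} First I will check that $M_{\mathbf{k},n}^{(p,q,\mu)}$ is a polynomial of degree $n$ in $(x,t)$. Since $P_{\mathbf{k}}$, $|\mathbf{k}|=m$, is an orthogonal polynomial for the centrally symmetric weight ${\mathsf{w}}_{\mu}$, it has parity $m$. This is visible from the explicit basis $P_{l,j}^{m}$, in which only monomials of degree $m-2j$ occur. Hence $t^{m}P_{\mathbf{k}}(x/t)$ is a sum of terms $t^{2j}\times(\text{homogeneous of degree } m-2j \text{ in } x)$, so it is a homogeneous polynomial of degree $m$. Multiplying by $M_{n-m}^{(\cdot,\cdot)}(t)$, which has degree $n-m$, gives total degree $n$.

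\textbf{Separation of variables.} Substituting $x=ty$, $y\in\mathbb{B}^{d}$, gives $\mathrm{d}x=t^{d}\,\mathrm{d}y$ and $(t^{2}-\|x\|^{2})^{\mu-\frac12}=t^{2\mu-1}(1-\|y\|^{2})^{\mu-\frac12}$. The inner product then factors as
\begin{equation*}
b_{\mu}^{\mathbb{B}}\int_{\mathbb{B}^{d}}P_{\mathbf{k}}(y)P_{\mathbf{k}'}(y){\mathsf{w}}_{\mu}(y)\,\mathrm{d}y\;\times\;c_{p-2\alpha,q+2\alpha}\int_{0}^{\infty}M_{n-m}^{(\cdot)}(t)M_{n'-m'}^{(\cdot)}(t)\,t^{m+m'+2\alpha}\frac{t^{q}}{(1+t)^{p+q}}\,\mathrm{d}t,
\end{equation*}
where $2\alpha=2\mu+d-1$. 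Here I use $b^{M}_{p,q,\mu}=b_{\mu}^{\mathbb{B}}c_{p-2\alpha,q+2\alpha}$; note that $c_{p-2\mu-d+1,\,q+2\mu+d-1}$ is exactly $c_{p-2\alpha,q+2\alpha}$.

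The ball factor equals $\delta_{m,m'}\delta_{\mathbf{k},\mathbf{k}'}$, because $P_{\mathbf{k}}$ and $P_{\mathbf{k}'}$ are orthonormal within the same degree and orthogonal across different degrees. So we may set $m=m'$. The $t$-weight then becomes
\begin{equation*}
\frac{t^{q+2\alpha+2m}}{(1+t)^{(p-2\alpha-2m)+(q+2\alpha+2m)}}=w_{p',q'}(t),\qquad p'=p-2\alpha-2m,\quad q'=q+2\alpha+2m,
\end{equation*}
which is precisely the weight attached to the parameters appearing in (\ref{polM on the cone}).

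\textbf{Univariate orthogonality and the norm.} The univariate orthogonality of $M^{(p',q')}_{n-m}$ and $M^{(p',q')}_{n'-m}$ holds exactly when $p'>2\max\{n-m,n'-m\}+1$ and $q'>-1$. The first condition reads $p-2\mu-d+1-2m>2(N-m)+1$, i.e. $p>2N+2\mu+d$. The second follows from $q>-2\mu-d$, since $q'=q+2\mu+d-1+2m\ge q+2\mu+d-1$. Under these conditions the $t$-integral equals $c_{p',q'}^{-1}h_{n-m}^{(p',q')}\delta_{n,n'}$. Combined with the outer constant $c_{p-2\alpha,q+2\alpha}$, this gives the stated $H_{m,n}^{(p,q,\mu)}$.

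\textbf{Basis claim.} Orthogonality to all polynomials of lower degree follows from the same computation, because $\{M_{\mathbf{k},j}\}_{j<n}$ spans the polynomials of degree $<n$: its elements are linearly independent by orthogonality, and their number is $\sum_{j<n}\sum_{m\le j}\dim\mathcal{V}_{m}^{d}=\binom{n-1+d}{n-1}$. For the degree-$n$ layer, the count is $\sum_{m=0}^{n}\binom{m+d-1}{m}=\binom{n+d}{n}$, which matches $\dim\mathcal{V}_{n}$ in $d+1$ variables.

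\textbf{Main obstacle.} The main subtlety is that the space $\mathcal{V}_{n}$ only makes sense in the finite range. All integrals must converge, which requires $p$ large relative to the degrees involved; I will state the basis claim under the standing restriction $p>2n+2\mu+d$. Under that restriction every polynomial of degree $\le n$ is square integrable, so the dimension argument goes through.
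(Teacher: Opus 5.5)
Your proposal is correct and follows essentially the same route as the paper. You substitute $x=ty$ and factor the inner product into a ball integral, which gives $\delta_{m,m'}\delta_{\mathbf{k},\mathbf{k}'}$ by orthonormality, times a $t$-integral that is exactly the finite orthogonality of $M_{n-m}^{(p-2\alpha-2m,\,q+2\alpha+2m)}$. Your polynomiality check, your translation of the parameter restrictions, and your dimension count supply details the paper leaves implicit.

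There are two harmless slips. First, $\sum_{j<n}\sum_{m\le j}\binom{m+d-1}{m}$ equals $\binom{n+d}{n-1}$, the dimension of the polynomials of degree $<n$ in $d+1$ variables, not $\binom{n-1+d}{n-1}$. Second, $P^{m}_{l,j}$ contains monomials of degrees $m-2j,m-2j+2,\dots,m$, all of parity $m$, rather than only degree $m-2j$. Neither affects the argument.
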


\begin{proof}
It follows from the definition of the polynomials $M_{\mathbf{k,}n}^{\left(
p,q,\mu \right) }$%
\begin{eqnarray*}
&&\left \langle M_{\mathbf{k,}n}^{\left( p,q,\mu \right) },M_{\mathbf{k}%
^{\prime },n^{\prime }}^{\left( p,q,\mu \right) }\right \rangle _{p,q,\mu }
\\
&=&b_{p,q,\mu }^{M}\int_{\mathbb{V}^{d+1}}M_{n-m}^{\left( p-2\alpha
-2m,q+2m+2\alpha \right) }\left( t\right) M_{n^{\prime }-m^{\prime
}}^{\left( p-2\alpha -2m^{\prime },q+2\alpha +2m^{\prime }\right) }\left(
t\right) t^{m+m^{\prime }} \\
&&\times P_{\mathbf{k}}\left( \frac{x}{t}\right) P_{\mathbf{k}^{\prime
}}\left( \frac{x}{t}\right) (t^{2}-\Vert x\Vert ^{2})^{\mu -\frac{1}{2}}%
\frac{t^{q}}{\left( 1+t\right) ^{p+q}}\mathrm{d}x\mathrm{d}t \\
&=&c_{p-2\alpha ,q+2\alpha }\int \limits_{0}^{\infty }M_{n-m}^{\left(
p-2\alpha -2m,q+2\alpha +2m\right) }\left( t\right) M_{n^{\prime }-m^{\prime
}}^{\left( p-2\alpha -2m^{\prime },q+2\alpha +2m^{\prime }\right) }\left(
t\right) \frac{t^{q+2\alpha +m+m^{\prime }}}{\left( 1+t\right) ^{p+q}}%
\mathrm{d}t \\
&&\times b_{\mu }^{\mathbb{B}}\int_{{\mathbb{B}}^{d}}P_{\mathbf{k}}\left(
y\right) P_{\mathbf{k}^{\prime }}\left( y\right) \left( 1-\left \Vert
y\right \Vert ^{2}\right) ^{\mu -\frac{1}{2}}\mathrm{d}y.
\end{eqnarray*}%
Since $P_{\mathbf{k}}\left( y\right) $ is an orthonormal basis and $%
M_{n-m}^{\left( p-2\alpha -2m,q+2\alpha +2m\right) }\left( t\right) $ is
finitely orthogonal for $p>2n+2\mu +d$ and $q>-2\mu -d$, we obtain%
\begin{eqnarray*}
&&\left \langle M_{\mathbf{k,}n}^{\left( p,q,\mu \right) },M_{\mathbf{k}%
^{\prime },n^{\prime }}^{\left( p,q,\mu \right) }\right \rangle _{p,q,\mu }
\\
&=&c_{p-2\alpha ,q+2\alpha }\int \limits_{0}^{\infty }M_{n-m}^{\left(
p-2\alpha -2m,q+2\alpha +2m\right) }\left( t\right) M_{n^{\prime
}-m}^{\left( p-2\alpha -2m,q+2\alpha +2m\right) }\left( t\right) \\
&&\times \frac{t^{q+2\alpha +2m}}{\left( 1+t\right) ^{p+q}}\mathrm{d}%
t~\delta _{m,m^{\prime }}\delta _{\mathbf{k},\mathbf{k}^{\prime }} \\
&=&\frac{c_{p-2\alpha ,q+2\alpha }}{c_{p-2\alpha -2m,q+2\alpha +2m}}%
h_{n-m}^{\left( p-2\alpha -2m,q+2\alpha +2m\right) }~\delta _{n,n^{\prime
}}\delta _{m,m^{\prime }}\delta _{\mathbf{k},\mathbf{k}^{\prime }}.
\end{eqnarray*}
\end{proof}
For instance, let us consider $d=1,~p=202,~q=0$ and $\mu =0$. Two-variable
polynomials $\left \{ M_{\mathbf{k,}n}^{\left( 202,0,0\right) }\right \}
_{n=0}^{100}$ , $0\leq m\leq n,$\ are finitely orthogonal with respect to
the weight function $W_{202,0,0}\left( x,t\right) =(t^{2}-x^{2})^{-\frac{1}{2%
}}\frac{1}{\left( 1+t\right) ^{202}}$ on the cone. Indeed,%
\begin{eqnarray*}
&&\int \limits_{{\mathbb{V}}^{2}}~M_{\mathbf{k,}n}^{\left( 202,0,0\right)
}(x,t)M_{\mathbf{k}^{\prime },n^{\prime }}^{\left( 202,0,0\right)
}(x,t)(t^{2}-x^{2})^{-\frac{1}{2}}\frac{1}{\left( 1+t\right) ^{202}}\mathrm{d%
}x\mathrm{d}t \\
&=&\frac{c_{202,0}}{c_{202-2m,2m}}h_{n-m}^{\left( 202-2m,2m\right) }\delta
_{n,n^{\prime }}\delta _{m,m^{\prime }}\delta _{\mathbf{k},\mathbf{k}%
^{\prime }}.
\end{eqnarray*}%
For $d=1$, then these polynomials become $M_{m,n}^{\left( p,q,\mu \right)
}\left( x,t\right) =M_{n-m}^{\left( p-2m-2\mu ,q+2m+2\mu \right) }\left(
t\right) t^{m}C_{m}^{(\mu) }\left( \frac{x}{t}\right) $ where $\,0\leq m\leq
n. $ Here are some examples of these polynomials:%
\begin{eqnarray*}
M_{0,0}^{\left( p,q,\mu \right) }\left( x,t\right) &=&M_{0}^{\left( p-2\mu
,q+2\mu \right) }\left( t\right) C_{0}^{(\mu) }\left( \frac{x}{t}\right) =1, \\
M_{0,1}^{\left( p,q,\mu \right) }\left( x,t\right) &=&M_{1}^{\left( p-2\mu
,q+2\mu \right) }\left( t\right) C_{0}^{(\mu) }\left( \frac{x}{t}\right)
=\left( p-2\mu -2\right) t-\left( q+2\mu +1\right), \\
M_{1,1}^{\left( p,q,\mu \right) }\left( x,t\right) &=&M_{0}^{\left( p-2-2\mu
,q+2+2\mu \right) }\left( t\right) tC_{1}^{(\mu) }\left( \frac{x}{t}\right)
=2\mu x ,\\
M_{0,2}^{\left( p,q,\mu \right) }\left( x,t\right) &=&M_{2}^{\left( p-2\mu
,q+2\mu \right) }\left( t\right) C_{0}^{(\mu) }\left( \frac{x}{t}\right) \\
&=&\left( p-2\mu -4\right) \left( p-2\mu -3\right) t^{2}-2\left( p-2\mu
-3\right) \left( q+2\mu +2\right) t \\
&&+\left( q+2\mu +2\right) \left( q+2\mu +1\right), \\
M_{1,2}^{\left( p,q,\mu \right) }\left( x,t\right) &=&M_{1}^{\left( p-2-2\mu
,q+2+2\mu \right) }\left( t\right) tC_{1}^{(\mu) }\left( \frac{x}{t}\right) =%
\left[ \left( p-2\mu -2\right) t-\left( q+2\mu +1\right) \right] 2\mu x, \\
M_{2,2}^{\left( p,q,\mu \right) }\left( x,t\right) &=&M_{0}^{\left( p-4-2\mu
,q+4+2\mu \right) }\left( t\right) t^{2}C_{2}^{(\mu) }\left( \frac{x}{t}%
\right) =2\mu \left( \mu +1\right) x^{2}-\mu t^{2} \\
&&\vdots
\end{eqnarray*}

Our next theorem shows that for $q=0$ the polynomials in ${\mathcal{V}}_{n}({%
\mathbb{V}}^{d+1},W_{p,0,\mu })$ are eigenfunctions of a second order
differential operator.

\begin{theorem}
\label{equ-cone} For $n=0,1,2,...,$ every $v\in {\mathcal{V}}_{n}({\mathbb{V}%
}^{d+1},W_{p,0,\mu })$ satisfies the differential equation
\begin{eqnarray}
&&\left[ t(1+t)\partial t^{2}+2\left( 1+t\right) \left \langle
x,\bigtriangledown _{x}\right \rangle \partial t+\left( d+2\mu +\left(
-p+2\mu +d+1\right) t\right) \partial t\right.  \label{difeqM} \\
&&+\left. t\left( 1+t\right) \bigtriangleup _{x}-\left( t^{2}\bigtriangleup
_{x}-\left \langle x,\bigtriangledown _{x}\right \rangle ^{2}-\left( 2\mu
+d-p\right) \left \langle x,\bigtriangledown _{x}\right \rangle \right)
\right] v  \notag \\
&=&n\left( n-p+2\mu +d\right) v,  \notag
\end{eqnarray}%
where $\bigtriangleup _{x}$ and $\bigtriangledown _{x}$ indicate that
operators are acting on $x$ variable.

\begin{proof}
From the definition of $M_{\mathbf{k,}n}^{\left( p,q,\mu \right) }\left(
x,t\right) $, it is sufficient to set the result for $v=M_{\mathbf{k,}%
n}^{\left( p,0,\mu \right) }\left( x,t\right) $ in (\ref{polM on the cone}).
For the sake of brevity, let's consider%
\begin{equation*}
v\left( x,t\right) =f\left( t\right) H\left( x,t\right) ,
\end{equation*}%
where $f\left( t\right) =M_{n-m}^{\left( p-2m-2\mu -d+1,2m+2\mu +d-1\right)
}\left( t\right) $ and $H\left( x,t\right) =t^{m}P_{\mathbf{k}}^{m}\left(
\frac{x}{t}\right) .$ The differential equation for the finite polynomial $%
M_{n}^{\left( p,q\right) }$ shows that $f$ satisfies the following equation
\begin{eqnarray}
&&t\left( 1+t\right) f^{\prime \prime }+\left( \left( -p+2m+2\mu +d+1\right)
t+\left( d+2m+2\mu \right) \right) f^{\prime }  \label{difeq_g} \\
&=&\left( n-m\right) \left( n+m-p+2\mu +d\right) f.  \notag
\end{eqnarray}%
Since the polynomials $P_{\mathbf{k}}\left( x\right) $ satisfy the
differential equation (\ref{diffop}), the polynomials $H\left( x,t\right) $
satisfy the following differential equation%
\begin{equation}
\left( t^{2}\Delta _{x}-\left \langle x,\nabla _{x}\right \rangle
^{2}-\left( 2\mu +d-1\right) \left \langle x,\nabla _{x}\right \rangle
\right) H\left( x,t\right) =-m\left( m+2\mu +d-1\right) H\left( x,t\right) .
\label{fsatisfy}
\end{equation}%
Also, since $H\left( x,t\right) $ is a homogeneous function of degree $m$ in
its variables, this identity holds%
\begin{equation}
\left( t\frac{\partial }{\partial t}+\left \langle x,\nabla _{x}\right
\rangle \right) H\left( x,t\right) =mH\left( x,t\right) ,
\label{euleridentity}
\end{equation}%
from which, by taking the derivative with respect to $t,$ it follows that%
\begin{equation}
t\frac{\partial ^{2}}{\partial t^{2}}H\left( x,t\right) +\left \langle
x,\nabla _{x}\right \rangle \frac{\partial }{\partial t}H\left( x,t\right)
=\left( m-1\right) \frac{\partial }{\partial t}H\left( x,t\right) .
\label{dereuleridentity}
\end{equation}%
A direct computation of the derivatives of $v$ leads to%
\begin{eqnarray*}
&&t\left( 1+t\right) v_{tt}+2\left( 1+t\right) \left \langle x,\nabla
_{x}\right \rangle v_{t} \\
&=&t\left( 1+t\right) f^{\prime \prime }\left( t\right) H\left( x,t\right)
+2\left( 1+t\right) f^{\prime }\left( t\right) \left( t\frac{\partial }{%
\partial t}+\left \langle x,\nabla _{x}\right \rangle \right) H\left(
x,t\right) \\
&&+\left( 1+t\right) f\left( t\right) \left( 2\left \langle x,\nabla
_{x}\right \rangle \frac{\partial }{\partial t}H\left( x,t\right) +t\frac{%
\partial ^{2}}{\partial t^{2}}H\left( x,t\right) \right) .
\end{eqnarray*}%
Applying (\ref{euleridentity}) to the second term on the right hand side, it
follows from (\ref{difeq_g})%
\begin{eqnarray*}
&&t\left( 1+t\right) v_{tt}+2\left( 1+t\right) \left \langle x,\nabla
_{x}\right \rangle v_{t} \\
&=&-\left( d+2\mu +\left( -p+2\mu +d+1\right) t\right) f^{\prime }\left(
t\right) H\left( x,t\right) \\
&&+\left( n-m\right) \left( n+m-p+2\mu +d\right) v \\
&&+\left( 1+t\right) f\left( t\right) \left( 2\left \langle x,\nabla
_{x}\right \rangle \frac{\partial }{\partial t}H\left( x,t\right) +t\frac{%
\partial ^{2}}{\partial t^{2}}H\left( x,t\right) \right) .
\end{eqnarray*}%
By adding and subtracting the term $\left( d+2\mu +\left( -p+2\mu
+d+1\right) t\right) f\left( t\right) \frac{\partial }{\partial t}H\left(
x,t\right) $ in the last equation, we get
\begin{eqnarray}
&&t\left( 1+t\right) v_{tt}+2\left( 1+t\right) \left \langle x,\nabla
_{x}\right \rangle v_{t}+\left( d+2\mu +\left( -p+2\mu +d+1\right) t\right)
v_{t}  \label{eq} \\
&=&\left( n-m\right) \left( n+m-p+2\mu +d\right) v+\left( 1-p\right)
tf\left( t\right) \frac{\partial }{\partial t}H\left( x,t\right)  \notag \\
&&+\left( 1+t\right) f\left( t\right) \left[ t\frac{\partial ^{2}}{\partial
t^{2}}H\left( x,t\right) +\left( 2\left \langle x,\nabla _{x}\right \rangle
+2\mu +d\right) \frac{\partial }{\partial t}H\left( x,t\right) \right] .
\notag
\end{eqnarray}%
Using (\ref{fsatisfy}), (\ref{euleridentity}), and (\ref{dereuleridentity}),
the term inside the square bracket on the right hand side can be written as
follows
\begin{eqnarray}
&&t\frac{\partial ^{2}}{\partial t^{2}}H\left( x,t\right) +\left( 2\left
\langle x,\nabla _{x}\right \rangle +2\mu +d\right) \frac{\partial }{%
\partial t}H\left( x,t\right)  \notag \\
&=&\left[ \left( 2\mu +d+m-1+\left \langle x,\nabla _{x}\right \rangle
\right) \frac{\partial }{\partial t}H\left( x,t\right) \right]  \notag \\
&=&t^{-1}\left( -\left \langle x,\nabla _{x}\right \rangle ^{2}-\left( 2\mu
+d-1\right) \left \langle x,\nabla _{x}\right \rangle +m\left( 2\mu
+m+d-1\right) \right) H\left( x,t\right)  \notag \\
&=&-t\Delta _{x}H\left( x,t\right) .  \label{A}
\end{eqnarray}%
If we use this equality and also apply the identity (\ref{euleridentity}) in
the second term on the right hand side of (\ref{eq}), we conclude that%
\begin{eqnarray*}
&&t\left( 1+t\right) v_{tt}+2\left( 1+t\right) \left \langle x,\nabla
_{x}\right \rangle v_{t}+\left( d+2\mu +\left( -p+2\mu +d+1\right) t\right)
v_{t} \\
&&+t\left( 1+t\right) \Delta _{x}v+\left( 1-p\right) \left \langle x,\nabla
_{x}\right \rangle v \\
&=&\left( \left( n-m\right) \left( n+m-p+2\mu +d\right) +\left( 1-p\right)
m\right) v.
\end{eqnarray*}%
By means of the following identity
\begin{equation}
\left( n-m\right) \left( n+m-p+2\mu +d\right) v+m\left( 1-p\right) v
=n\left(
n+2\mu +d-p\right) v-m\left( m+2\mu +d-1\right) v  \label{iden}
\end{equation}
and taking into account (\ref{fsatisfy}), we complete the proof.
\end{proof}
\end{theorem}

\begin{remark}
When $q\neq 0,$ the finite polynomials $M_{\mathbf{k,}n}^{\left( p,q,\mu
\right) }\left( x,t\right) $ on the cone satisfy a differential equation,
but the eigenvalues are determined by both $m$ and $n$.
\end{remark}

\begin{theorem}
\label{rec-M} Let $M_{\mathbf{k,}n}^{\left( p,q,\mu \right) }\in {\mathcal{V}%
}_{n}({\mathbb{V}}^{d+1},W_{p,q,\mu })$. The polynomials $M_{\mathbf{k,}%
n}^{\left( p,q,\mu \right) }\left( x,t\right) $ satisfy the following
three-term recurrence relation
\begin{eqnarray}
M_{\mathbf{k,}n+1}^{\left( p,q,\mu \right) }\left( x,t\right) &=&\left \{
\frac{\left( p-2n-2\mu -d\right) \left( p-2n-2\mu -d-1\right) }{\left(
p-m-n-2\mu -d\right) }t\right.  \notag \\
&&+\frac{2\left( n-m\right) \left( n-m+1\right) }{\left( p-2\mu
-d-2n+1\right) }  \notag \\
&&-\left. \frac{\left( p-2m-2\mu -d+1\right) \left( d+q+2\mu +2n\right) }{%
\left( p-2\mu -d-2n+1\right) }\right \} M_{\mathbf{k,}n}^{\left( p,q,\mu
\right) }\left( x,t\right)  \label{three-termrecur} \\
&&-\left \{ \frac{\left( n-m\right) \left( p-2\mu -d-2n-1\right) }{\left(
p-m-2\mu -d-n\right) }\right.  \notag \\
&&\times \left. \frac{\left( p+q-n+m\right) \left( d+q+2\mu +m+n-1\right) }{%
\left( p-2\mu -d-2n+1\right) }\right \} M_{\mathbf{k,}n-1}^{\left( p,q,\mu
\right) }\left( x,t\right) .  \notag
\end{eqnarray}
\end{theorem}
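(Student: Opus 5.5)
The plan is to reduce the statement to the univariate three-term recurrence (\ref{recurrenceM}) for $M_{n}^{(p,q)}$. The key structural fact is that in (\ref{polM on the cone}) the factor $H(x,t)=t^{m}P_{\mathbf{k}}^{m}(x/t)$ depends only on $\mathbf{k}$ and $m=|\mathbf{k}|$, not on $n$. Fix $\mathbf{k}$ with $|\mathbf{k}|=m$ and set
\[
P:=p-2\alpha-2m=p-2\mu-d+1-2m,\qquad Q:=q+2\alpha+2m=q+2\mu+d-1+2m .
\]
Then $M_{\mathbf{k},n\pm1}^{(p,q,\mu)}=M_{n-m\pm1}^{(P,Q)}(t)\,H(x,t)$ and $M_{\mathbf{k},n}^{(p,q,\mu)}=M_{n-m}^{(P,Q)}(t)\,H(x,t)$, all with the \emph{same} parameters $(P,Q)$. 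So I would apply (\ref{recurrenceM}) with $n$ replaced by $n-m$ and $(p,q)$ replaced by $(P,Q)$, and then multiply through by $H(x,t)$. This produces a relation of the form (\ref{three-termrecur}) at once. The boundary case $n=m$ needs only the observation that the coefficient of $M_{\mathbf{k},n-1}$ carries the factor $n-m$, so the undefined term $M_{-1}$ drops out, exactly as in the univariate case.

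What remains is to rewrite the univariate coefficients in terms of $p,q,\mu,d,n,m$. Three shifted quantities do most of the work:
\[
P-2(n-m)-1=p-2\mu-d-2n,\qquad P-2(n-m)-2=p-2\mu-d-2n-1,
\]
\[
P-(n-m)-1=p-m-n-2\mu-d .
\]
With these, the leading coefficient becomes $\frac{(p-2n-2\mu-d)(p-2n-2\mu-d-1)}{p-m-n-2\mu-d}$. For the constant term I would also use
\[
P-2(n-m)=p-2\mu-d-2n+1,\qquad Q+2(n-m)+1=q+2\mu+d+2n .
\]
For the coefficient of $M_{\mathbf{k},n-1}$ I would use $P+Q=p+q$, which gives $P+Q-(n-m)=p+q-n+m$, and $Q+n-m=q+2\mu+d+m+n-1$. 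Substituting these turns the $M_{n-1}$-coefficient directly into the stated product $\frac{(n-m)(p-2\mu-d-2n-1)(p+q-n+m)(d+q+2\mu+m+n-1)}{(p-m-2\mu-d-n)(p-2\mu-d-2n+1)}$.

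The main obstacle is the constant part of the coefficient of $M_{\mathbf{k},n}$. The univariate formula gives
\[
\frac{(p-2\mu-d-2n)\bigl(2(n-m)(n-m+1)-P\,(q+2\mu+d+2n)\bigr)}{(p-m-n-2\mu-d)(p-2\mu-d-2n+1)} .
\]
I would write $P=p-2m-2\mu-d+1$ and split the numerator into the two pieces $2(n-m)(n-m+1)$ and $-P(q+2\mu+d+2n)$, each over $p-2\mu-d-2n+1$. The displayed form in the statement shows exactly these two pieces. However, the prefactor $\frac{p-2\mu-d-2n}{p-m-n-2\mu-d}$ must be carried along. So the last step is to check carefully whether the statement's constant term is meant to include this factor, which I expect it does. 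A sanity check with $m=0$ or $n=m=0$, using the explicit $M_1$, $M_2$ listed earlier, would confirm the correct normalization.
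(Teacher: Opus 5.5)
Your reduction is exactly the paper's proof. You substitute $n\to n-m$, $p\to p-2\alpha-2m$, $q\to q+2\alpha+2m$ in (\ref{recurrenceM}) and multiply by $t^{m}P_{\mathbf{k}}^{m}(x/t)$, which is legitimate because $(P,Q)$ depend only on $m$. Your translations of the leading coefficient and of the coefficient of $M_{\mathbf{k},n-1}$ are correct and agree with the statement.

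The discrepancy you flag in the constant part is genuine, and it should be resolved against the statement. Do not expect the printed form to ``mean'' to include the factor $\frac{p-2\mu-d-2n}{p-m-n-2\mu-d}$. That factor is forced by (\ref{recurrenceM}), and it is dropped both in the statement and in the substituted display in the paper's proof. The polynomials $tM_{n-m}^{(P,Q)}$, $M_{n-m}^{(P,Q)}$, $M_{n-m-1}^{(P,Q)}$ have distinct degrees, so the recurrence coefficients are unique. Hence the printed relation is false in general for $n>m$. What your argument actually proves is the corrected relation, whose constant part is
\[
\frac{(p-2\mu-d-2n)\bigl(2(n-m)(n-m+1)-(p-2m-2\mu-d+1)(q+2\mu+d+2n)\bigr)}{(p-m-n-2\mu-d)(p-2\mu-d-2n+1)} .
\]

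Your proposed sanity check at $n=m$ cannot detect this, because there the missing factor equals $1$ and $2(n-m)(n-m+1)=0$. Take instead $m=0$, $n=1$. Matching the $t$-coefficient $-2(P-3)(Q+2)$ of $M_{2}^{(P,Q)}$ forces the constant part to be $\frac{(P-3)(4-P(Q+3))}{(P-2)^{2}}$, whereas the printed formula gives $\frac{4-P(Q+3)}{P-2}$. With $P=10$, $Q=0$ these are $-91/32$ and $-13/4$, and only the first reproduces the coefficient $-28$.

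The missing factor tends to $1$ as $p\to\infty$, so the Laguerre limit in the subsequent remark is unaffected.
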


\begin{proof}
By substituting $n\rightarrow n-m$, $p\rightarrow p-2\alpha -2m$, $%
q\rightarrow q+2\alpha +2m$, and $x\rightarrow t$ in (\ref{recurrenceM}), we get%
\begin{eqnarray*}
&&M_{n+1-m}^{\left( p-2\alpha -2m,q+2\alpha +2m\right) }\left( t\right) \\
&=&\left \{ \frac{\left( p-2n-2\mu -d\right) \left( p-2n-2\mu -d-1\right) }{%
\left( p-m-n-2\mu -d\right) }t\right. \\
&&+\frac{2\left( n-m\right) \left( n-m+1\right) }{\left( p-2n-2\mu
-d+1\right) } \\
&&-\left. \frac{\left( p-2m-2\mu -d+1\right) \left( d+q+2n+2\mu \right) }{%
\left( p-2n-2\mu -d+1\right) }\right \} M_{n-m}^{\left( p-2\alpha
-2m,q+2\alpha +2m\right) }\left( t\right) \\
&&-\left \{ \frac{\left( n-m\right) \left( p-2n-2\mu -d-1\right) }{\left(
p-m-n-2\mu -d\right) }\right. \\
&&\times \left. \frac{\left( p+q+m-n\right) \left( q+m+n+2\mu +d-1\right) }{%
\left( p-2n-2\mu -d+1\right) }\right \} M_{n-1-m}^{\left( p-2\alpha
-2m,q+2\alpha +2m\right) }\left( t\right) ,
\end{eqnarray*}%
where $\alpha =\mu +\frac{d-1}{2}.$ Multiplying both sides by $t^{m}P_{%
\mathbf{k}}^{m}\left( \frac{x}{t}\right) $, we obtain (\ref{three-termrecur}%
).
\end{proof}

We now give a limit relation between the finite polynomials $M_{\mathbf{k,}%
n}^{\left( p,q,\mu \right) }\left( x,t\right) $ on the cone and Laguerre
polynomials ${\mathsf{L}}_{{\mathbf{k}},n}^{\beta ,\mu }(x,t)$ on the cone
as follows:

\begin{theorem}
\label{limit-cone} The limit relation is satisfied between $M_{\mathbf{k,}%
n}^{\left( p,q,\mu \right) }\left( x,t\right) $ and ${\mathsf{L}}_{{\mathbf{k%
}},n}^{q,\mu }(x,t)$%
\begin{equation*}
\underset{p\rightarrow \infty }{\lim }p^{m}M_{\mathbf{k,}n}^{\left( p,q,\mu
\right) }\left( \frac{x}{p},\frac{t}{p}\right) =\left( -1\right)
^{n-m}\left( n-m\right) !{\mathsf{L}}_{{\mathbf{k}},n}^{q,\mu }(x,t).
\end{equation*}
\end{theorem}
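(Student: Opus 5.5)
The plan is to separate the radial and ball factors, exactly as in the definition (\ref{polM on the cone}), and reduce everything to the one-variable limit (\ref{limit}). Write $\alpha=\mu+\frac{d-1}{2}$ and $|\mathbf{k}|=m$. Substituting $(x,t)\mapsto(x/p,t/p)$ in (\ref{polM on the cone}) gives
\begin{equation*}
p^{m}M_{\mathbf{k},n}^{(p,q,\mu)}\left(\frac{x}{p},\frac{t}{p}\right)
=M_{n-m}^{(p-2\alpha-2m,\,q+2\alpha+2m)}\left(\frac{t}{p}\right)\,p^{m}\left(\frac{t}{p}\right)^{m}P_{\mathbf{k}}^{m}\left(\frac{x/p}{t/p}\right).
\end{equation*}
The ball factor is invariant under the scaling: $x/t$ is unchanged and $p^{m}(t/p)^{m}=t^{m}$. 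Indeed, this is why the factor $p^{m}$ appears in the statement. So the ball factor equals $t^{m}P_{\mathbf{k}}^{m}(x/t)$ identically in $p$, and only the radial factor carries the limit.

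For the radial factor, set $p'=p-2\alpha-2m$ and $q'=q+2\alpha+2m$. Then $t/p=\frac{t}{p'}\cdot\frac{p'}{p}$, with $p'/p\to1$. I would not rely on a shifted-argument version of (\ref{limit}). Instead I would argue directly with coefficients, since $M_{n-m}^{(p',q')}$ is a polynomial of fixed degree $n-m$. From the Rodrigues formula (\ref{M-Rod}), or the explicit hypergeometric-type expansion visible in the listed samples, the coefficient of $s^{j}$ in $M_{N}^{(p',q')}(s)$ has the form
\begin{equation*}
(-1)^{N-j}\binom{N}{j}(q'+j+1)_{N-j}\,\bigl(p'\text{-polynomial of degree } j \text{ with leading term } (p')^{j}\bigr).
\end{equation*}
For example, for $N=2$ this is $(p'-4)(p'-3)$ at $j=2$ and $-2(p'-3)(q'+2)$ at $j=1$.

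Putting $s=t/p$, the $j$-th term therefore behaves like $(-1)^{N-j}\binom{N}{j}(q'+j+1)_{N-j}(p'/p)^{j}t^{j}(1+O(1/p))$. Its limit is $(-1)^{N-j}\binom{N}{j}(q'+j+1)_{N-j}t^{j}$. Summing over $j$ gives $(-1)^{N}N!\,L_{N}^{(q')}(t)$ by the standard explicit formula for Laguerre polynomials. This is (\ref{limit}) with its argument $p$ replaced by $p'$ and $t/p'$ replaced by $t/p$, which is harmless.

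Hence the radial factor tends to $(-1)^{n-m}(n-m)!\,L_{n-m}^{q+2m+2\mu+d-1}(t)$. Multiplying by $t^{m}P_{\mathbf{k}}^{m}(x/t)$ and comparing with the definition of ${\mathsf{L}}_{\mathbf{k},n}^{\beta,\mu}$ at $\beta=q$ gives the claim.

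The main obstacle is justifying that the shift $p\to p'$ (a fixed finite shift) and the mismatch between $p$ and $p'$ in the scaling do not affect the limit. I expect the coefficientwise argument above to handle this cleanly. If one prefers to quote (\ref{limit}) verbatim, the alternative is to note that convergence in (\ref{limit}) is uniform on compacts, so composing with $t\mapsto tp'/p\to t$ is allowed.
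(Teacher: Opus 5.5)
Your proof is correct and follows the paper's overall structure. After multiplying by $p^{m}$, the ball factor $t^{m}P_{\mathbf{k}}^{m}(x/t)$ is unchanged by the scaling, so everything reduces to the limit of $M_{n-m}^{(p',q')}(t/p)$ with $p'=p-2\alpha-2m$ and $q'=q+2\alpha+2m$. Where you differ from the paper is in how that radial limit is taken.

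The paper substitutes $t/p$ directly into (\ref{M-Rod}). The powers of $p$ coming from the change of variable cancel, and since $p'+q'=p+q$ one gets
\[
M_{n-m}^{(p',q')}\left(\frac{t}{p}\right)=(-1)^{n-m}\frac{(1+t/p)^{p+q}}{t^{q'}}\,\frac{d^{n-m}}{dt^{n-m}}\Bigl(t^{n-m+q'}(1+t/p)^{n-m-p-q}\Bigr).
\]
Here the exponent $p+q$ matches the scaling $t/p$, so $(1+t/p)^{\pm(p+q)}\to e^{\pm t}$ and the Laguerre Rodrigues formula appears. On that route, the $p$ versus $p'$ mismatch you call the main obstacle never arises.

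Your coefficientwise argument has one advantage: it avoids interchanging the limit with the $(n-m)$-fold derivative, which the paper does without comment. However, it rests on the general expansion
$M_N^{(p,q)}(s)=\sum_{j=0}^{N}(-1)^{N-j}\binom{N}{j}(q+j+1)_{N-j}\prod_{i=1}^{j}(p-N-i)\,s^{j}$, equivalently $(-1)^N(q+1)_N\,{}_2F_1(-N,N+1-p;q+1;-s)$. The paper never states this, and you only read it off from the samples with $N\le 3$. The formula is true, but you need to cite or prove it.

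The shortest self-contained fix is Leibniz's rule applied to (\ref{M-Rod}), which gives
$M_N^{(P,Q)}(s)=\sum_{i=0}^{N}(-1)^{N-i}\binom{N}{i}(Q+i+1)_{N-i}(P+Q-N)_i\,s^{i}(1+s)^{N-i}$.
With $P+Q=p+q$ and $s=t/p$, each term tends to $(-1)^{N-i}\binom{N}{i}(Q+i+1)_{N-i}t^{i}$, because $(p+q-N)_i/p^{i}\to 1$. The sum is then $(-1)^{N}N!\,L_N^{(Q)}(t)$ directly.
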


\begin{proof}
From the Rodrigues formula (\ref{M-Rod}) and the limit relation (\ref{limit}%
) we can write%
\begin{eqnarray*}
&&\underset{p\rightarrow \infty }{\lim }M_{n-m}^{\left( p-2\alpha
-2m,q+2\alpha +2m\right) }\left( \frac{t}{p}\right) \\
&=&\left( -1\right) ^{n-m}\underset{p\rightarrow \infty }{\lim }\frac{\left(
1+t/p\right) ^{p+q}}{t^{q+2m+2\alpha }}\frac{d^{n-m}\left( t^{n+m+q+2\alpha
}\left( 1+t/p\right) ^{n-m-p-q}\right) }{dt^{n-m}} \\
&=&\left( -1\right) ^{n-m}t^{-\left( q+2m+2\alpha \right) }e^{t}\frac{%
d^{n-m}\left( t^{n+m+q+2\alpha }e^{-t}\right) }{dt^{n-m}} \\
&=&\left( -1\right) ^{n-m}\left( n-m\right) !L_{n-m}^{q+2m+2\alpha }\left(
t\right) ,
\end{eqnarray*}%
from which we conclude that%
\begin{eqnarray*}
\underset{p\rightarrow \infty }{\lim }p^{m}M_{\mathbf{k,}n}^{\left( p,q,\mu
\right) }\left( \frac{x}{p},\frac{t}{p}\right) &=&\left( -1\right)
^{n-m}\left( n-m\right) !L_{n-m}^{q+2m+2\alpha }\left( t\right) t^{m}P_{%
\mathbf{k}}^{m}\left( \frac{x}{t}\right) \\
&=&\left( -1\right) ^{n-m}\left( n-m\right) !{\mathsf{L}}_{{\mathbf{k}}%
,n}^{q,\mu }(x,t).
\end{eqnarray*}
\end{proof}

\begin{remark}
Using this limit relation, we can give the differential equation for
Laguerre polynomials on the cone. Substituting $x\rightarrow \frac{x}{p}$ and%
$~t\rightarrow \frac{t}{p}$ in Theorem \ref{equ-cone}, and after multiplying
$p^{m-1}$ and letting $p\rightarrow \infty $ $,$ we obtain the differential
equation given by (\ref{equ-Laguerre}) for Laguerre polynomials $u={\mathsf{L}}_{{\mathbf{k}},n}^{0,\mu
}(x,t)$ on the cone
\begin{equation*}
\left( t(\Delta _{x}+\partial _{t}^{2})+2\left \langle x,\nabla _{x}\right
\rangle \partial _{t}-\left \langle x,\nabla _{x}\right \rangle +\left( 2\mu
+d-t\right) \partial _{t}\right) u=-nu.
\end{equation*}
\end{remark}

\begin{remark}
Replacing $x$ by $\frac{x}{p}$ and $~t$ by $\frac{t}{p}$ in Theorem \ref%
{rec-M}, and then multiplying $p^{m}$ and taking the limit as $p\rightarrow
\infty ,$ we arrive at following recurrence relation for Laguerre
polynomials ${\mathsf{L}}_{{\mathbf{k}},n}^{q,\mu }(x,t)$ on the cone%
\begin{equation*}
\left( n-m+1\right) {\mathsf{L}}_{{\mathbf{k}},n+1}^{q,\mu }(x,t)=\left(
d+q+2\mu +2n-t\right) {\mathsf{L}}_{{\mathbf{k}},n}^{q,\mu }(x,t)-\left(
d+q+2\mu +m+n-1\right) {\mathsf{L}}_{{\mathbf{k}},n-1}^{q,\mu }(x,t).
\end{equation*}
\end{remark}

\subsection{Second finite class on the cone}

Let consider the weight function
\begin{equation*}
W_{\mu ,p}\left( x,t\right) =(t^{2}-\Vert x\Vert ^{2})^{\mu -\frac{1}{2}%
}t^{-p}e^{-1/t}
\end{equation*}%
on the solid cone
\begin{equation*}
{\mathbb{V}}^{d+1}=\left \{ (x,t):\Vert x\Vert \leq t,\, \,x\in {\mathbb{B}}%
^{d},\, \,0\leq t<\infty \right \}
\end{equation*}%
and define the inner product%
\begin{equation*}
\left \langle f,g\right \rangle _{\mu ,p}:=b_{\mu ,p}^{N}\int_{{\mathbb{V}}%
^{d+1}}f\left( x,t\right) g\left( x,t\right) (t^{2}-\Vert x\Vert ^{2})^{\mu -%
\frac{1}{2}}t^{-p}e^{-1/t}\mathrm{d}x\mathrm{d}t
\end{equation*}%
where
\begin{equation*}
b_{\mu ,p}^{N}=c_{p-2\mu -d+1}\times b_{\mu }^{\mathbb{B}}\text{ with }c_{p}=%
\frac{1}{\Gamma \left( p-1\right) }
\end{equation*}%
is the normalization constant such that $\left \langle 1,1\right \rangle
_{\mu ,p}=1.$

Let ${\mathcal{V}}_{n}({\mathbb{V}}^{d+1},W_{\mu ,p})$ be the space of
orthogonal polynomials of degree $n$ in $d+1$ variables with respect to the
inner product ${\langle }.,.{\rangle }_{\mu ,p}$. Similar to the first
class, the second class of finite polynomials on the cone is defined in
terms of the second class of finite orthogonal polynomials $N_{n}^{(p)}(x)$
in one variable and orthogonal polynomials on the unit ball as follows:

\begin{proposition}
\label{prop:OPconefinite} Let ${\mathbb{P}}_{m}=\{P_{\mathbf{k}}:|{\mathbf{k}%
}|=m\}$ be an orthonormal basis of ${\mathcal{V}}_{m}^{d}({\mathbb{B}}^{d},{%
\mathsf{w}}_{\mu })$ for $m\leq n$ . Define
\begin{equation*}
N_{\mathbf{k,}n}^{\left( \mu ,p\right) }\left( x,t\right) =N_{n-m}^{\left(
p-2\alpha -2m\right) }\left( t\right) t^{m}P_{\mathbf{k}}^{m}\left( \frac{x}{%
t}\right) ,\text{\quad }\left \vert \mathbf{k}\right \vert =m,~0\leq m\leq n
\end{equation*}%
where $\alpha =\mu +\frac{d-1}{2}{.}$ Then $\mathbb{N}_{n,m}=\left \{ N_{%
\mathbf{k,}n}^{\left( \mu ,p\right) }:\left \vert \mathbf{k}\right \vert
=m,~\,0\leq m\leq n\right \} $ is an orthogonal basis of ${\mathcal{V}}%
_{n}^{d}(\mathbb{V}^{d+1},W_{\mu ,p})$ and it satisfies the orthogonality
relation
\begin{equation*}
\left \langle N_{\mathbf{k,}n}^{\left( \mu ,p\right) },N_{\mathbf{k}^{\prime
},n^{\prime }}^{\left( \mu ,p\right) }\right \rangle _{\mu
,p}=H_{m,n}^{\left( \mu ,p\right) }\delta _{n,n^{\prime }}\delta
_{m,m^{\prime }}\delta _{\mathbf{k},\mathbf{k}^{\prime }}
\end{equation*}%
under the restriction $p>2N+2\mu +d,$\ $N=\max \left \{ n,n^{\prime }\right
\} $ where $H_{m,n}^{\left( \mu ,p\right) }$ denotes the norm square of $N_{%
\mathbf{k,}n}^{\left( \mu ,p\right) }$ given by
\begin{equation*}
H_{m,n}^{\left( \mu ,p\right) }=\frac{c_{p-2\alpha }}{c_{p-2\alpha -2m}}%
h_{n-m}^{\left( p-2\alpha -2m\right) },
\end{equation*}%
where $h_{n}^{\left( p\right) }$ is given by (\ref{normN}).
\end{proposition}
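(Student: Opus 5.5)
The plan mirrors the proof of Proposition \ref{Mpoly_def}, with $M_n^{(p,q)}$ replaced by $N_n^{(p)}$. I will write out $\langle N_{\mathbf{k},n}^{(\mu,p)},N_{\mathbf{k}',n'}^{(\mu,p)}\rangle_{\mu,p}$ and make the substitution $x=ty$, $y\in\mathbb{B}^d$, so that $\mathrm{d}x=t^{d}\mathrm{d}y$ and $(t^2-\|x\|^2)^{\mu-\frac12}=t^{2\mu-1}(1-\|y\|^2)^{\mu-\frac12}$. The factor $t^{m+m'}P_{\mathbf{k}}(y)P_{\mathbf{k}'}(y)$ then separates. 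The integral factors as
\begin{equation*}
c_{p-2\alpha}\int_0^\infty N_{n-m}^{(p-2\alpha-2m)}(t)N_{n'-m'}^{(p-2\alpha-2m')}(t)\,t^{m+m'+2\alpha-p}e^{-1/t}\,\mathrm{d}t\times b_\mu^{\mathbb{B}}\int_{\mathbb{B}^d}P_{\mathbf{k}}(y)P_{\mathbf{k}'}(y)\,\mathsf{w}_\mu(y)\,\mathrm{d}y .
\end{equation*}
Here I use $d+2\mu-1=2\alpha$ and $b^N_{\mu,p}=c_{p-2\alpha}b^{\mathbb{B}}_\mu$, since $p-2\mu-d+1=p-2\alpha$.

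Next, I will use orthonormality on the ball. This gives $\delta_{m,m'}\delta_{\mathbf{k},\mathbf{k}'}$ whenever $P_{\mathbf{k}}$ and $P_{\mathbf{k}'}$ come from the orthonormal bases of $\mathcal{V}_m^d$ and $\mathcal{V}_{m'}^d$, since spaces of different degree are orthogonal. With $m=m'$, the $t$-weight becomes $t^{-(p-2\alpha-2m)}e^{-1/t}$, which is exactly $w_{p-2\alpha-2m}$. By the finite orthogonality of $N_j^{(p')}$ with $p'=p-2\alpha-2m$, the $t$-integral equals $h_{n-m}^{(p-2\alpha-2m)}/c_{p-2\alpha-2m}$ times $\delta_{n,n'}$. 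This yields $H^{(\mu,p)}_{m,n}$ as stated.

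The step needing care is the parameter restriction. Finite orthogonality of $N_{n-m}^{(p')}$ and $N_{n'-m}^{(p')}$ requires $p'>2\max\{n-m,n'-m\}+1$. With $p'=p-2\alpha-2m=p-2\mu-d+1-2m$, this becomes $p>2\max\{n,n'\}+2\mu+d$, independent of $m$, which matches the hypothesis $p>2N+2\mu+d$. I also need the weight to be integrable, i.e.\ $p'>1$, which follows from the same condition. Finally, the span claim follows by counting dimensions: for each $m\le n$ there are $\dim\mathcal{V}_m^d$ polynomials, and each is a genuine polynomial of degree $n$ in $(x,t)$ because $t^mP_{\mathbf{k}}(x/t)$ is a polynomial (the ball polynomials have parity matching their degree) and $N_{n-m}$ has degree $n-m$. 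Together with mutual orthogonality and orthogonality to lower degrees, this gives a basis of $\mathcal{V}_n(\mathbb{V}^{d+1},W_{\mu,p})$, exactly as in \eqref{eq:sQcone}.
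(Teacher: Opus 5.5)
Your proposal is correct and takes the same route as the paper: the paper gives no separate proof here, but it clearly intends the computation of Proposition \ref{Mpoly_def}, namely substituting $x=ty$, factoring into a ball integral and a one-variable integral against $w_{p-2\alpha-2m}$, and reading off $H_{m,n}^{(\mu,p)}$. Your extra checks of the parameter restriction and the dimension count for the basis claim go beyond what the paper writes and are sound; note only that $t^mP_{\mathbf{k}}(x/t)$ is a polynomial simply because $\deg P_{\mathbf{k}}\le m$, so the parity argument is not needed.
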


For example, let us consider $d=1,$ $\mu =0,~p=300.$ $\ $Two-variable
polynomials $\left \{ N_{\mathbf{k,}n}^{\left( 0,300\right) }\right \}
_{n=0}^{149}$ , $0\leq m\leq n,$\ are finitely orthogonal with respect to
the weight function $W_{0,300}\left( x,t\right) =\left( t^{2}-x^{2}\right)
^{-1/2}t^{-300}e^{-1/t}$ on the cone. Indeed, for $n,n^{\prime }\leq 149$ it
follows%
\begin{eqnarray*}
&&\int_{\mathbb{V}^{2}}~N_{\mathbf{k,}n}^{\left( 0,300\right) }\left(
x,t\right) ~N_{\mathbf{k}^{\prime },n^{\prime }}^{\left( 0,300\right)
}\left( x,t\right) \left( t^{2}-x^{2}\right) ^{-1/2}t^{-300}e^{-1/t}\mathrm{d%
}x\mathrm{d}t \\
&=&\frac{c_{300}}{c_{300-2m}}h_{n-m}^{\left( 300-2m\right) }\delta
_{n,n^{\prime }}\delta _{m,m^{\prime }}\delta _{\mathbf{k},\mathbf{k}%
^{\prime }}.
\end{eqnarray*}%
For $d=1$, then these polynomials become $N_{m,n}^{\left( \mu ,p\right)
}\left( x,t\right) =N_{n-m}^{\left( p-2\mu -2m\right) }\left( t\right)
t^{m}C_{m}^{(\mu) }\left( \dfrac{x}{t}\right) $ where $\,0\leq m\leq n.$ Here
are some examples of these polynomials:
\begin{eqnarray*}
N_{0,0}^{\left( \mu ,p\right) }\left( x,t\right) &=&N_{0}^{\left( p-2\mu
\right) }\left( t\right) C_{0}^{(\mu) }\left( \frac{x}{t}\right) =1, \\
N_{0,1}^{\left( \mu ,p\right) }\left( x,t\right) &=&N_{1}^{\left( p-2\mu
\right) }\left( t\right) C_{0}^{(\mu) }\left( \frac{x}{t}\right) =\left(
p-2\mu -2\right) t-1, \\
N_{1,1}^{\left( \mu ,p\right) }\left( x,t\right) &=&N_{0}^{\left( p-2\mu
-2\right) }\left( t\right) tC_{1}^{(\mu) }\left( \frac{x}{t}\right) =2\mu x, \\
N_{0,2}^{\left( \mu ,p\right) }\left( x,t\right) &=&N_{2}^{\left( p-2\mu
\right) }\left( t\right) C_{0}^{(\mu) }\left( \frac{x}{t}\right) =\left(
p-2\mu -4\right) \left( p-2\mu -3\right) t^{2} \\
&&\text{ \  \  \  \  \  \  \  \  \  \  \  \  \  \  \  \  \  \  \  \  \  \  \  \  \  \  \  \ }-2\left(
p-2\mu -3\right) t+1, \\
N_{1,2}^{\left( \mu ,p\right) }\left( x,t\right) &=&N_{1}^{\left( p-2\mu
-2\right) }\left( t\right) tC_{1}^{(\mu) }\left( \frac{x}{t}\right) =\left(
\left( p-2\mu -4\right) t-1\right) 2\mu x, \\
N_{2,2}^{\left( \mu ,p\right) }\left( x,t\right) &=&N_{0}^{\left( p-2\mu
-4\right) }\left( t\right) t^{2}C_{2}^{(\mu) }\left( \frac{x}{t}\right) =2\mu
\left( 1+\mu \right) x^{2}-\mu t^{2} \\
&&\vdots
\end{eqnarray*}

Unlike other orthogonal polynomials families on the cone, the finite
polynomials $N_{\mathbf{k,}n}^{\left( \mu ,p\right) }$ do not satisfy a
second order partial differential equation. Instead, they satisfy a
differential-difference equation which we will give in the next theorem.

\begin{theorem}
For $m=0,1,\ldots ,$ let $\mu >-\frac{1}{2},$ $p>2n+2\mu +d$ and $\mathbb{P}%
_{m}=\left \{ P_{\mathbf{k}}:\left \vert \mathbf{k}\right \vert =m\right \} $
be an orthonormal basis of $\mathcal{V}_{m}\left( \mathbb{B}^{d},{\mathsf{w}}%
_{\mu }\right) .$ Then the polynomials $N_{\mathbf{k,}n}^{\left( \mu
,p\right) }$ satisfy the following differential-difference equation%
\begin{equation*}
L_{\mu ,p}\left[ N_{\mathbf{k,}n}^{\left( \mu ,p\right) }\right] =n\left(
n+2\mu +d-p\right) ~N_{\mathbf{k,}n}^{\left( \mu ,p\right) }-\left(
n-m\right) \left( p-2\mu -m-n-d\right) ~N_{\mathbf{k,}n-1}^{\left( \mu
,p-2\right) }
\end{equation*}

where $L_{\mu ,p}$ is given by
\begin{equation*}
L_{\mu ,p}=t^{2}\frac{\partial ^{2}}{\partial t^{2}}+\left( 2t\left \langle
x,\nabla _{x}\right \rangle +\left( 1+2\mu +d-p\right) t\right) \frac{%
\partial }{\partial t}+\left( 2\mu +d-p\right) \left \langle x,\nabla
_{x}\right \rangle +\left \langle x,\nabla _{x}\right \rangle ^{2},
\end{equation*}%
where $\bigtriangledown _{x}$ denotes the gradient operator acting on $x$
variable.
\end{theorem}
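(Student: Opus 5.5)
The plan is to argue basis element by basis element, as in the proof of Theorem \ref{equ-cone}. We write
\[
v(x,t)=N_{\mathbf{k},n}^{(\mu,p)}(x,t)=f(t)H(x,t),
\]
where $f(t)=N_{n-m}^{(p-2\alpha-2m)}(t)$ and $H(x,t)=t^{m}P_{\mathbf{k}}^{m}(x/t)$, with $2\alpha=2\mu+d-1$. Put $c=1+2\mu+d-p$ and $E=\langle x,\nabla_x\rangle$. With this notation
\[
L_{\mu,p}=t^2\partial_t^2+2tE\partial_t+ct\,\partial_t+(c-1)E+E^2 .
\]

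The only facts about $H$ we need come from homogeneity. Since $H$ is homogeneous of degree $m$, (\ref{euleridentity}) gives $(t\partial_t+E)H=mH$. The operators $t\partial_t$ and $E$ commute, so applying $t\partial_t+E$ twice gives $(t\partial_t+E)^2H=m^2H$. Using $(t\partial_t)^2=t^2\partial_t^2+t\partial_t$, this becomes
\[
t^2H_{tt}+tH_t+2tEH_t+E^2H=m^2H.
\]
Unlike the first class, the ball equation (\ref{fsatisfy}) is not needed, because $L_{\mu,p}$ contains no $\Delta_x$.

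Next we expand $L_{\mu,p}v$ by the Leibniz rule and group the terms by $f''$, $f'$ and $f$.
\begin{itemize}
\item The $f''$ term is $t^2f''H$.
\item The $f'$ terms are $2tf'(tH_t+EH)+ctf'H=(2m+c)tf'H$.
\item The $f$ terms are $f\,[t^2H_{tt}+2tEH_t+E^2H+ctH_t+(c-1)EH]$. By the squared Euler identity this equals $f\,[m^2H+(c-1)(tH_t+EH)]=(m^2+(c-1)m)v$.
\end{itemize}
Hence
\[
L_{\mu,p}v=\bigl[t^2f''+(c+2m)tf'\bigr]H+m(m+c-1)v .
\]

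Now we use the equation for $f$. By (\ref{Dif-N}) with $p\to p-2\alpha-2m$ and $n\to n-m$, $f$ satisfies
\[
t^2f''+(c+2m)tf'+f'=(n-m)(n+m+c-1)f .
\]
The extra lower-order term $f'$ is exactly the obstruction to a pure PDE, which explains the difference term in the statement. Substituting gives
\[
L_{\mu,p}v=\bigl[(n-m)(n+m+c-1)+m(m+c-1)\bigr]v-f'H .
\]
The bracket simplifies to $n(n+c-1)=n(n+2\mu+d-p)$, which is the stated eigenvalue.

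It remains to rewrite $f'H$. By (\ref{Nderrel}),
\[
f'=(n-m)(p-2\alpha-2m-(n-m+1))\,N_{n-m-1}^{(p-2\alpha-2m-2)}(t)=(n-m)(p-2\mu-d-m-n)\,N_{n-1-m}^{((p-2)-2\alpha-2m)}(t).
\]
Multiplying by $H=t^mP^m_{\mathbf{k}}(x/t)$ gives, by the definition in Proposition \ref{prop:OPconefinite} with $p$ replaced by $p-2$,
\[
f'H=(n-m)(p-2\mu-m-n-d)\,N_{\mathbf{k},n-1}^{(\mu,p-2)} .
\]
This is the claimed identity.

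The main obstacle is the bookkeeping in the $f$-terms, namely checking that the coefficient of $t\partial_t$ in $L_{\mu,p}$ is $c$ and that of $E$ is $c-1$, so that they combine into $(c-1)(t\partial_t+E)$ after using the squared Euler identity. The parameter shift in (\ref{Nderrel}) also needs care, so that the lowered polynomial is indexed by $p-2$ rather than $p$. When $m=n$ the last term vanishes, consistent with the factor $n-m$.
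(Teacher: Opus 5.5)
Your argument is correct. Its skeleton is the paper's: writing $v=fH$, expanding by the Leibniz rule, using the Euler identity (\ref{euleridentity}), using the equation (\ref{Dif-N}) for $f$, and using (\ref{Nderrel}) to identify $f'H$.

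The difference is how you evaluate the terms in which $f$ is not differentiated. The paper copies the strategy of Theorem \ref{equ-cone}. It rewrites $tH_{tt}+(2\langle x,\nabla_x\rangle+2\mu+d)H_t$ as $-t\Delta_xH$ via (\ref{A}), which depends on the ball equation (\ref{fsatisfy}). This produces an intermediate identity containing $t^2\Delta_x v$. The paper then uses (\ref{iden}) and (\ref{fsatisfy}) a second time to eliminate $\Delta_x$ in favour of $\langle x,\nabla_x\rangle^2$ and $\langle x,\nabla_x\rangle$.

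You bypass this detour with the second-order Euler identity $(t\partial_t+E)^2H=m^2H$, where $E=\langle x,\nabla_x\rangle$ as in your notation. It gives those terms directly as $m(m+c-1)v$.

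Your route is shorter, and it shows something the paper's proof hides: the orthogonality of $P_{\mathbf{k}}$ on the ball plays no role here. The identity $L_{\mu,p}[fH]=n(n+2\mu+d-p)fH-f'H$ holds for any polynomial $H$ homogeneous of degree $m$ in $(x,t)$. So the theorem is really the univariate equation (\ref{Dif-N}) carried over by homogeneity.

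The paper's detour only gains uniformity with the proof of Theorem \ref{equ-cone}. There, a term $t\Delta_x$ genuinely survives in the operator, so (\ref{fsatisfy}) cannot be avoided.
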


\begin{proof}
According to the\ definition of $N_{\mathbf{k,}n}^{\left( \mu ,p\right) }$,
for the simplicity, let $v\left( x,t\right) =f\left( t\right) H\left(
x,t\right) $ for%
\begin{equation}
f\left( t\right) =N_{n-m}^{\left( p-2\mu -2m-d+1\right) }\left( t\right) ,%
\text{\qquad }H\left( x,t\right) =t^{m}P_{\mathbf{k}}\left( \frac{x}{t}%
\right) .  \label{changevariable}
\end{equation}%
From the equation (\ref{Dif-N}), it is seen that the function $f$ satisfies
\begin{eqnarray}
&&t^{2}f^{\prime \prime }\left( t\right) +\left( \left( 1+2\mu
+2m+d-p\right) t+1\right) f^{\prime }\left( t\right)  \label{gsatisfy} \\
&&-\left. \left( n-m\right) \left( n+m+2\mu +d-p\right) f\left( t\right)
=0.\right.  \notag
\end{eqnarray}%
Since $H\left( x,t\right) $ is a homogeneous polynomial of degree $m$, the
identity (\ref{euleridentity}) holds and also $H\left( x,t\right) $
satisfies the differential equation (\ref{fsatisfy}). By taking the
derivatives of $v,$ we directly obtain
\begin{eqnarray*}
t^{2}v_{tt}+\left( 2t\left \langle x,\nabla _{x}\right \rangle -1\right)
v_{t} &=&t^{2}f^{\prime \prime }\left( t\right) H\left( x,t\right) \\
&&+2tf^{\prime }\left( t\right) \left( t\frac{\partial }{\partial t}H\left(
x,t\right) +\left \langle x,\nabla _{x}\right \rangle H\left( x,t\right)
\right) \\
&&+tf\left( t\right) \left( t\frac{\partial ^{2}}{\partial t^{2}}H\left(
x,t\right) +2\left \langle x,\nabla _{x}\right \rangle \frac{\partial }{%
\partial t}H\left( x,t\right) \right) \\
&&-f\left( t\right) \frac{\partial }{\partial t}H\left( x,t\right)
-f^{\prime }\left( t\right) H\left( x,t\right) .
\end{eqnarray*}%
Substituting (\ref{euleridentity}) into the second term on the right hand
side and making use of (\ref{gsatisfy}), we arrive at
\begin{eqnarray*}
&&t^{2}v_{tt}+\left( 2t\left \langle x,\nabla _{x}\right \rangle -1\right)
v_{t}+\left( \left( 1+2\mu +d-p\right) t+1\right) v_{t} \\
&=&\left( n-m\right) \left( n+m+2\mu +d-p\right) v+\left( 1-p\right)
tf\left( t\right) \frac{\partial }{\partial t}H\left( x,t\right) -f^{\prime
}\left( t\right) H\left( x,t\right) \\
&&+tf\left( t\right) \left[ t\frac{\partial ^{2}}{\partial t^{2}}H\left(
x,t\right) +\left( 2\left \langle x,\nabla _{x}\right \rangle +2\mu
+d\right) \frac{\partial }{\partial t}H\left( x,t\right) \right] .
\end{eqnarray*}%
From the equality (\ref{A}), it follows%
\begin{eqnarray*}
t^{2}v_{tt}+2t\left \langle x,\nabla _{x}\right \rangle v_{t}+\left( 1+2\mu
+d-p\right) tv_{t} &=&\left( n-m\right) \left( n+m+2\mu +d-p\right) v \\
&&+\left( 1-p\right) tf\left( t\right) \frac{\partial }{\partial t}H\left(
x,t\right) \\
&&-f^{\prime }\left( t\right) H\left( x,t\right) -t^{2}\Delta _{x}v,
\end{eqnarray*}%
from which, using (\ref{euleridentity}) again in the second term on the
right hand side yields
\begin{eqnarray*}
&&t^{2}v_{tt}+2t\left \langle x,\nabla _{x}\right \rangle v_{t}+\left(
1+2\mu +d-p\right) tv_{t}+t^{2}\Delta _{x}v+\left( 1-p\right) \left \langle
x,\nabla _{x}\right \rangle v \\
&=&\left( n-m\right) \left( n+m+2\mu +d-p\right) v+m\left( 1-p\right)
v-f^{\prime }\left( t\right) H\left( x,t\right) .
\end{eqnarray*}%
Considering the identity (\ref{iden}) and using (\ref{fsatisfy}) again, we
deduce that
\begin{eqnarray*}
&&t^{2}v_{tt}+2t\left \langle x,\nabla _{x}\right \rangle v_{t}+\left(
1+2\mu +d-p\right) tv_{t}+\left( 2\mu +d-p\right) \left \langle x,\nabla
_{x}\right \rangle v+\left \langle x,\nabla _{x}\right \rangle ^{2}v \\
&=&n\left( n+2\mu +d-p\right) v-f^{\prime }\left( t\right) H\left(
x,t\right) .
\end{eqnarray*}%
Finally, using the recurrence relation (\ref{Nderrel}) for the function $f$
in the second term of the right hand side, we conclude that
\begin{eqnarray*}
&&t^{2}\frac{\partial ^{2}}{\partial t^{2}}\left[ N_{\mathbf{k,}n}^{\left(
\mu ,p\right) }\right] +2t\left \langle x,\nabla _{x}\right \rangle \frac{%
\partial }{\partial t}\left[ N_{\mathbf{k,}n}^{\left( \mu ,p\right) }\right]
+\left( 1+2\mu +d-p\right) t\frac{\partial }{\partial t}\left[ N_{\mathbf{k,}%
n}^{\left( \mu ,p\right) }\right] \\
&&+\left( 2\mu +d-p\right) \left \langle x,\nabla _{x}\right \rangle \left[
N_{\mathbf{k,}n}^{\left( \mu ,p\right) }\right] +\left \langle x,\nabla
_{x}\right \rangle ^{2}\left[ N_{\mathbf{k,}n}^{\left( \mu ,p\right) }\right]
\\
&=&n\left( n+2\mu +d-p\right) \left[ N_{\mathbf{k,}n}^{\left( \mu ,p\right) }%
\right] -\left( n-m\right) \left( p-2\mu -m-n-d\right) ~N_{\mathbf{k,}%
n-1}^{\left( \mu ,p-2\right) },
\end{eqnarray*}%
which completes the proof.
\end{proof}

Using the recurrence relation for the univariate finite polynomials $%
N_{n}^{\left( p\right) }$, we can give a recurrence relation for $N_{\mathbf{%
k,}n}^{\left( \mu ,p\right) }\in {\mathcal{V}}_{n}({\mathbb{V}}^{d+1},W_{\mu
,p})$ in the next theorem.

\begin{theorem}
The polynomials $N_{\mathbf{k,}n}^{\left( \mu ,p\right) }$satisfy the
following three-term recurrence relation%
\begin{eqnarray}
N_{\mathbf{k,}n+1}^{\left( \mu ,p\right) }\left( x,t\right) &=&\left \{
\frac{\left( p-2\mu -2n-d-1\right) \left( p-2\mu -2n-d\right) }{\left(
p-2\mu -m-n-d\right) }t\right.  \notag \\
&&-\left. \frac{\left( p-2\mu -2m-d+1\right) \left( p-2\mu -2n-d\right) }{%
\left( p-2\mu -m-n-d\right) \left( p-2\mu -2n-d+1\right) }\right \} ~N_{%
\mathbf{k,}n}^{\left( \mu ,p\right) }\left( x,t\right)  \label{Nrecreloncone}
\\
&&-\frac{\left( n-m\right) \left( p-2\mu -2n-d-1\right) }{\left( p-2\mu
-m-n-d\right) \left( p-2\mu -2n-d+1\right) }~N_{\mathbf{k,}n-1}^{\left( \mu
,p\right) }\left( x,t\right) .  \notag
\end{eqnarray}
\end{theorem}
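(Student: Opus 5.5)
The plan is to mirror the proof of Theorem \ref{rec-M}. Since $N_{\mathbf{k},n}^{(\mu,p)}(x,t)=N_{n-m}^{(p-2\alpha-2m)}(t)\,t^{m}P_{\mathbf{k}}^{m}(x/t)$, and the factor $H(x,t)=t^{m}P_{\mathbf{k}}^{m}(x/t)$ does not depend on $n$, all the $n$-dependence sits in the univariate factor. The three polynomials $N_{\mathbf{k},n+1}^{(\mu,p)}$, $N_{\mathbf{k},n}^{(\mu,p)}$ and $N_{\mathbf{k},n-1}^{(\mu,p)}$ share the same $m$ and $\mathbf{k}$. Hence they share the same univariate parameter $P:=p-2\alpha-2m=p-2\mu-d+1-2m$, and their univariate degrees are $n+1-m$, $n-m$ and $n-1-m$. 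So the three-term recurrence (\ref{Nrecrel}) for $N_{\cdot}^{(P)}$ should transfer directly.

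The steps are as follows.

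First, substitute $n\to n-m$, $p\to P=p-2\mu-d+1-2m$ and $x\to t$ in (\ref{Nrecrel}).

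Second, simplify each combination of parameters that appears there:
\begin{align*}
P-(2(n-m)+2)&=p-2\mu-2n-d-1,\\
P-(2(n-m)+1)&=p-2\mu-2n-d,\\
P-(n-m+1)&=p-2\mu-m-n-d,\\
P-2(n-m)&=p-2\mu-2n-d+1.
\end{align*}
With these, the coefficient of $t$ becomes $\frac{(p-2\mu-2n-d-1)(p-2\mu-2n-d)}{p-2\mu-m-n-d}$. The constant term $-\frac{P\,(P-2(n-m)-1)}{(P-(n-m)-1)(P-2(n-m))}$ becomes
\begin{equation*}
-\frac{(p-2\mu-2m-d+1)(p-2\mu-2n-d)}{(p-2\mu-m-n-d)(p-2\mu-2n-d+1)}.
\end{equation*}
The coefficient of the lower-order term, $\frac{(n-m)(P-2(n-m)-2)}{(P-(n-m)-1)(P-2(n-m))}$, becomes
\begin{equation*}
\frac{(n-m)(p-2\mu-2n-d-1)}{(p-2\mu-m-n-d)(p-2\mu-2n-d+1)}.
\end{equation*}
These agree exactly with (\ref{Nrecreloncone}).

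Third, multiply the resulting univariate identity by $H(x,t)$ to obtain (\ref{Nrecreloncone}).

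The only points needing care are bookkeeping. One is checking that the denominators are nonzero, which holds under $p>2n+2\mu+d$ as in Proposition \ref{prop:OPconefinite}. The other is the edge case $n=m$. There $N_{\mathbf{k},n-1}^{(\mu,p)}$ with $|\mathbf{k}|=m>n-1$ is undefined, but its coefficient carries the factor $(n-m)=0$, so we interpret that term as zero, consistent with $N_{-1}\equiv 0$ in (\ref{Nrecrel}). No genuine obstacle is expected beyond the parameter simplification above.
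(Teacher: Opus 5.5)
Your proposal is correct and is essentially the paper's proof: substitute $n\to n-m$, $p\to p-2\alpha-2m$, $x\to t$ in (\ref{Nrecrel}), then multiply by $t^{m}P_{\mathbf{k}}(x/t)$. Your explicit parameter simplifications all check out, and your remarks on the nonvanishing denominators and on the $n=m$ edge case (where the coefficient $(n-m)$ kills the term) are careful additions that the paper leaves implicit.
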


\begin{proof}
If we take $n\rightarrow n-m,$ $x\rightarrow t,$ $p\rightarrow
p-2\alpha -2m$ in (\ref{Nrecrel}), we deduce that
\begin{eqnarray*}
N_{n+1-m}^{\left( p-2\alpha -2m\right) }\left( t\right) &=&\left \{ \frac{%
\left( p-2\mu -2n-d-1\right) \left( p-2\mu -2n-d\right) }{\left( p-2\mu
-m-n-d\right) }t\right. \\
&&-\left. \frac{\left( p-2\mu -2m-d+1\right) \left( p-2\mu -2n-d\right) }{%
\left( p-2\mu -m-n-d\right) \left( p-2\mu -2n-d+1\right) }\right \}
N_{n-m}^{\left( p-2\alpha -2m\right) }\left( t\right) \\
&&-\frac{\left( n-m\right) \left( p-2\mu -2n-d-1\right) }{\left( p-2\mu
-m-n-d\right) \left( p-2\mu -2n-d+1\right) }N_{n-1-m}^{\left( p-2\alpha
-2m\right) }\left( t\right)
\end{eqnarray*}
where $\alpha =\mu +\frac{d-1}{2}.$ If we multiple both sides by $t^{m}P_{%
\mathbf{k}}\left( \frac{x}{t}\right) ,$ we arrive at the given
result.\bigskip
\end{proof}

\subsection{Finite orthogonal polynomials on the surface of a cone}

We will consider finite orthogonal polynomials on the surface of the cone
which is denoted by%
\begin{equation*}
\mathbb{V}_{0}^{d+1}:=\left \{ \left( x,t\right) \in \mathbb{V}^{d+1}:\left
\Vert x\right \Vert =t,\text{ }0\leq t<\infty \right \} .
\end{equation*}%
Let $w\left( t\right) $ be a nonnegative function on the interval $\left[
0,\infty \right) $ so that $\int \limits_{0}^{\infty }t^{d-1}w\left(
t\right) \mathrm{d}t<\infty $. We define the inner product on the surface of
the $\mathbb{V}_{0}^{d+1}$ as follows%
\begin{equation*}
\left \langle f,g\right \rangle _{w}:=b_{w}\int_{\mathbb{V}%
_{0}^{d+1}}f\left( x,t\right) g\left( x,t\right) w\left( t\right) \mathrm{d}%
\sigma \left( x,t\right)
\end{equation*}%
where $\mathrm{d}\sigma $ is the Lebesgue measure on the surface of the cone
and $b_{w}$ denotes a normalization constant defined such that $\left
\langle 1,1\right \rangle =1.$ Let ${\mathcal{V}}_{n}({\mathbb{V}}%
_{0}^{d+1},w)$ be the space of orthogonal polynomials with respect to the
inner product ${\langle }.,.{\rangle }_{w},$ which is well defined on the
space of polynomials in $\left( x,t\right) $ variables modulo the polynomial
idea generated by $\left \Vert x\right \Vert ^{2}-t^{2}.$

In this section, as in the previous part of the paper, we will examine two
classes of finite orthogonal polynomials on the surface of the cone. Unlike
case of the on the cone, we will use spherical harmonics on the surface of
the cone. We will present the orthogonality relations and obtain the
differential equations satisfied by them. Let's start with first family.

\subsection{First finite class on the surface of the cone}

We consider the weight function $w_{p,q}=\frac{t^{q}}{\left( 1+t\right)
^{p+q}}$ on the surface of the cone $\mathbb{V}_{0}^{d+1}$
\begin{equation*}
\mathbb{V}_{0}^{d+1}:=\left \{ \left( x,t\right) \in \mathbb{V}^{d+1}:\left
\Vert x\right \Vert =t,\text{ }0\leq t<\infty \right \}
\end{equation*}
and define the inner product
\begin{equation*}
\left \langle f,g\right \rangle _{p,q}=b_{p,q}\int \limits_{\mathbb{V}%
_{0}^{d+1}}f\left( x,t\right) g\left( x,t\right) t^{q}\left( 1+t\right)
^{-\left( p+q\right) }\mathrm{d}\sigma \left( x,t\right)
\end{equation*}
on the surface of the cone $\mathbb{V}_{0}^{d+1}$.
By changing variables with $x=\xi t,$ the integral on the surface of the
cone can be written as follows
\begin{eqnarray*}
\int_{\mathbb{V}_{0}^{d+1}}f\left( x,t\right) \mathrm{d}\sigma \left(
x,t\right) &=&\int \limits_{0}^{\infty }\int_{\left \Vert x\right \Vert
=t}f\left( x,t\right) \mathrm{d}\sigma \left( x,t\right) \\
&=&\int \limits_{0}^{\infty }t^{d-1}\int_{\mathbb{S}^{d-1}}f\left( t\xi ,t\right) \mathrm{d}\sigma \left( \xi \right) \mathrm{d}t,
\end{eqnarray*}%
from which, the normalization constant is given by $b_{p,q}=\frac{1}{\omega _{d}}c_{p-d+1,q+d-1}$ such that $\left \langle 1,1\right \rangle _{p,q}=1$ where $\omega _{d}$ denotes the surface area of $\mathbb{S}^{d-1}$ and $c_{p,q}$ is defined as in (\ref{Def:normcons M}).

As stated at the beginning of this section, a basis of $\mathcal{V}%
_{n}^{d}\left( \mathbb{V}_{0}^{d+1},w_{p,q}\right) $ can be expressed in
terms of the finite orthogonal polynomials $M_{n}^{\left( p,q\right) }$ and
spherical harmonics.

\begin{proposition}
For $m=0,1,\ldots $ let $\left \{ Y_{l}^{m}:1\leq l\leq \dim \mathcal{H}%
_{m}^{d}\right \} $ denote an orthonormal basis of $\mathcal{H}_{m}^{d}$.
For $0\leq m\leq n,$ $1\leq l\leq \dim \mathcal{H}_{m}^{d},$ define
\begin{equation}
S_{n,m,l}^{p,q,M}\left( x,t\right) =M_{n-m}^{\left( p-2m-d+1,q+2m+d-1\right)
}\left( t\right) Y_{l}^{m}\left( x\right) ,\text{\quad }p>2N+d,\text{ }%
q>-d,~N=\max \left \{ n,n^{\prime }\right \} .  \label{Mpoly onsurface}
\end{equation}%
Then the family $\left \{ S_{n,m,l}^{p,q,M}:0\leq m\leq n,\text{ }1\leq
l\leq \dim \mathcal{H}_{m}^{d}\right \} $ forms an orthogonal basis of $%
\mathcal{V}_{n}^{d}\left( \mathbb{V}_{0}^{d+1},w_{p,q}\right) $ for $p>2N+d,$
$q>-d,~N=\max \left \{ n,n^{\prime }\right \} $. More precisely,%
\begin{equation}
\left \langle S_{n,m,l}^{p,q,M},S_{n^{\prime },m^{\prime },l^{\prime
}}^{p,q,M}\right \rangle _{p,q}=H_{m,n}^{p,q,M}\delta _{n,n^{\prime }}\delta
_{l,l^{\prime }}\delta _{m,m^{\prime }},  \label{Mpoly_orthogonal}
\end{equation}%
where
\begin{equation}
H_{m,n}^{p,q,M}=\left \langle S_{n,m,l}^{p,q,M},S_{n,m,l}^{p,q,M}\right
\rangle _{p,q}=\frac{c_{p-d+1,q+d-1}}{c_{p-2m-d+1,q+2m+d-1}}%
h_{n-m}^{(p-2m-d+1,q+2m+d-1)}  \label{Mpoly_orthonormal}
\end{equation}%
where $h_{n}^{\left( p,q\right) }$ denotes the norm square of the
polynomials $M_{n}^{\left( p,q\right) }$given by (\ref{Def:norm M}).
\end{proposition}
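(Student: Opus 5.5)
The plan mirrors the proof of Proposition \ref{Mpoly_def}, with the ball integral replaced by a sphere integral. First I compute the inner product directly. I use the change of variables $x=t\xi$, $\xi\in\mathbb{S}^{d-1}$, recorded just before the statement, together with the homogeneity $Y_{l}^{m}(t\xi)=t^{m}Y_{l}^{m}(\xi)$. This gives
\begin{eqnarray*}
&&\left\langle S_{n,m,l}^{p,q,M},S_{n^{\prime},m^{\prime},l^{\prime}}^{p,q,M}\right\rangle_{p,q}
=c_{p-d+1,q+d-1}\int_{0}^{\infty}M_{n-m}^{(p-2m-d+1,q+2m+d-1)}(t)\\
&&\times M_{n^{\prime}-m^{\prime}}^{(p-2m^{\prime}-d+1,q+2m^{\prime}+d-1)}(t)\frac{t^{q+d-1+m+m^{\prime}}}{(1+t)^{p+q}}\mathrm{d}t
\times\frac{1}{\omega_{d}}\int_{\mathbb{S}^{d-1}}Y_{l}^{m}(\xi)Y_{l^{\prime}}^{m^{\prime}}(\xi)\mathrm{d}\sigma(\xi).
\end{eqnarray*}
Since the spherical harmonics are orthonormal on $\mathbb{S}^{d-1}$, the spherical factor equals $\delta_{m,m^{\prime}}\delta_{l,l^{\prime}}$. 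So we may take $m=m^{\prime}$ in the $t$-integral.

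With $m=m^{\prime}$, set $P=p-2m-d+1$ and $Q=q+2m+d-1$. Then $p+q=P+Q$ and $t^{q+d-1+2m}=t^{Q}$, so the weight in the $t$-integral is exactly $w_{P,Q}(t)$. The univariate finite orthogonality of $M^{(P,Q)}_{j}$ then applies, provided $P>2\max\{n-m,n^{\prime}-m\}+1$ and $Q>-1$.
\begin{itemize}
\item The condition $P>2(n-m)+1$ becomes $p>2n+d$.
\item The condition $Q>-1$ holds for all $m\ge0$ once $q>-d$.
\end{itemize}
These are exactly the hypotheses $p>2N+d$ and $q>-d$. The $t$-integral therefore equals $h_{n-m}^{(P,Q)}\delta_{n,n^{\prime}}/c_{P,Q}$. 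Multiplying by $c_{p-d+1,q+d-1}$ gives (\ref{Mpoly_orthonormal}).

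For the basis claim, I first check degrees. Each $S_{n,m,l}^{p,q,M}$ is a polynomial of degree $n$ in $(x,t)$, being a product of a degree $n-m$ polynomial in $t$ and a degree $m$ homogeneous polynomial in $x$. Orthogonality to all lower-degree polynomials modulo $\|x\|^{2}-t^{2}$ follows from the relation above. The reason is that the $S_{n^{\prime},m^{\prime},l^{\prime}}^{p,q,M}$ with $n^{\prime}<n$ span the polynomials of degree $<n$ on $\mathbb{V}_{0}^{d+1}$. Indeed, on the cone any monomial reduces, via $\|x\|^{2}=t^{2}$ and the harmonic decomposition $\mathcal{P}_{k}^{d}=\bigoplus_{j}\|x\|^{2j}\mathcal{H}_{k-2j}^{d}$, to combinations of $t^{i}Y^{m}(x)$.

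Counting gives $\sum_{m=0}^{n}\dim\mathcal{H}_{m}^{d}=\dim\mathcal{P}_{n}^{d}+\dim\mathcal{P}_{n-1}^{d}=\binom{n+d-1}{n}+\binom{n+d-2}{n-1}$, which matches $\dim\mathcal{V}_{n}(\mathbb{V}_{0}^{d+1},w)$. Linear independence comes from the mutual orthogonality, since every norm $H^{p,q,M}_{m,n}$ is positive under the stated restrictions.

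The main delicate step is the parameter bookkeeping. I need to check that the shifted parameters $(p-2m-d+1,q+2m+d-1)$ satisfy the univariate finiteness conditions uniformly in $m$. I also need to track the normalization constants $b_{p,q}=\omega_{d}^{-1}c_{p-d+1,q+d-1}$ correctly.
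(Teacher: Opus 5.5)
Your proposal is correct and follows the paper's proof. You substitute $x=t\xi$, use the homogeneity of $Y_l^m$ to split the inner product into a radial and a spherical integral, and remove the spherical factor by orthonormality. You then apply the univariate finite orthogonality of $M_{n-m}^{(p-2m-d+1,q+2m+d-1)}$, with the same parameter bookkeeping: $p>2N+d$ and $q>-d$.

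The one addition is your span-and-dimension-count argument for the basis claim, which the paper leaves implicit. It is valid as written. Its spanning step quietly uses that $M_{j}^{(P,Q)}$ has exact degree $j$, which holds because $P>2(n-m)+1$.
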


\begin{proof}
From the definition of $S_{n,m,l}^{p,q,M},$ we can write
\begin{eqnarray*}
\left \langle S_{n,m,l}^{p,q,M},S_{n^{\prime },m^{\prime },l^{\prime }}^{%
\text{ }p,q,M}\right \rangle &=&b_{p,q}\int_{\mathbb{V}%
_{0}^{d+1}}S_{n,m,l}^{p,q,M}\left( x,t\right) S_{n^{\prime },m^{\prime
},l^{\prime }}^{\text{ }p,q,M}\left( x,t\right) t^{q}\left( 1+t\right)
^{-\left( p+q\right) }\mathrm{d}\sigma \left( x,t\right) \\
&=&\frac{1}{\varpi _{d}}c_{p-d+1,q+d-1}\int \limits_{0}^{\infty }\int_{\left
\Vert x\right \Vert =t}M_{n-m}^{\left( p-2m-d+1,q+2m+d-1\right) }\left(
t\right) M_{n^{\prime }-m^{\prime }}^{\left( p-2m^{\prime }-d+1,q+2m^{\prime
}+d-1\right) }\left( t\right) \\
&&\times Y_{l}^{m}\left( x\right) Y_{l^{\prime }}^{m^{\prime }}\left(
x\right) t^{q}\left( 1+t\right) ^{-\left( p+q\right) }\mathrm{d}\sigma
\left( x,t\right) .
\end{eqnarray*}

Then using change of variable $x=t\xi ,$ we derive
\begin{eqnarray*}
&=&c_{p-d+1,q+d-1}\int \limits_{0}^{\infty }M_{n-m}^{\left(
p-2m-d+1,q+2m+d-1\right) }\left( t\right) M_{n^{\prime }-m^{\prime
}}^{\left( p-2m^{\prime }-d+1,q+2m^{\prime }+d-1\right) }\left( t\right)
t^{m+m^{\prime }+d-1+q}\left( 1+t\right) ^{-\left( p+q\right) }dt \\
&&\times \frac{1}{\varpi _{d}}\int_{\mathbb{S}^{d-1}}Y_{l}^{m}\left( \xi
\right) Y_{l^{\prime }}^{m^{\prime }}\left( \xi \right) \mathrm{d}\sigma
\left( \xi \right) .
\end{eqnarray*}

Since $Y_{l}^{m}$ is orthonormal, it follows%
\begin{eqnarray*}
\left \langle S_{n,m,l}^{p,q,M},S_{n^{\prime },m^{\prime },l^{\prime }}^{%
\text{ }p,q,M}\right \rangle &=&\delta _{m,m^{\prime }}\delta _{l,l^{\prime
}}~c_{p-d+1,q+d-1}\int \limits_{0}^{\infty }M_{n-m}^{\left(
p-2m-d+1,q+2m+d-1\right) }\left( t\right) M_{n^{\prime }-m}^{\left(
p-2m-d+1,q+2m+d-1\right) }\left( t\right) \\
&&\times \frac{t^{q+2m+d-1}}{\left( 1+t\right) ^{p+q}}dt \\
&=&\frac{c_{p-d+1,q+d-1}}{c_{p-2m-d+1,q+2m+d-1}}%
h_{n-m}^{(p-2m-d+1,q+2m+d-1)}\delta _{n,n^{\prime }}\delta _{m,m^{\prime
}}\delta _{l,l^{\prime }}.
\end{eqnarray*}
\end{proof}

As in the case of the solid cone, the first family $S_{n,m,l}^{p,q,M}\left(
x,t\right) $ of finite orthogonal polynomials on the surface of the cone are
eigenfunctions of a second order differential operator only when $q=-1.$

\begin{theorem}
\label{equ-first}Let $d\geq 2$. Every $v\in \mathcal{V}_{n}^{d}\left(
\mathbb{V}_{0}^{d+1},w_{p,-1}\right) $ satisfies the differential equation
\begin{equation}
\left[ t\left( 1+t\right) \partial t^{2}+\left( d-1+\left( -p+d+1\right)
t\right) \partial t+t^{-1}\Delta _{0}^{\left( x\right) }\right] v=n\left(
n-p+d\right) v,  \label{difeqMpoly_onsurface}
\end{equation}%
where $\Delta _{0}^{\left( x\right) }$ denotes the Laplace-Beltrami operator
in $x\in \mathbb{S}^{d-1}.$

\begin{proof}
We set up the result for $v\left( x,t\right) =S_{n,m,l}^{p,q,M}\left(
x,t\right) $ in (\ref{Mpoly onsurface}). For $x=t\xi ,$ let's write it as
follows%
\begin{equation*}
S_{n,m,l}^{p,q,M}\left( x,t\right) =f\left( t\right) t^{m}Y_{l}^{m}\left(
\xi \right)
\end{equation*}%
where $f\left( t\right) =M_{n-m}^{\left( p-2m-d+1,q+2m+d-1\right) }\left(
t\right) .$ Using the differential equation (\ref{equ-M}) for the finite
orthogonal polynomials $M_{n}^{\left( p,q\right) }\left( x\right) $, it is
seen that the polynomial $g\left( t\right) =f\left( t\right) t^{m}$
satisfies the equation%
\begin{eqnarray*}
&&t\left( 1+t\right) g^{\prime \prime }\left( t\right) +\left( q+d+\left(
-p+d+1\right) t\right) g^{\prime }\left( t\right)  \\
&=&n\left( n-p+d\right) g\left( t\right) +m\left( m+q+d-1\right)
t^{-1}g\left( t\right) .
\end{eqnarray*}%
This equation is also satisfied for $v\left( x,t\right) =g\left( t\right)
Y_{l}^{m}\left( \xi \right) $
\begin{equation*}
\left[ t\left( 1+t\right) \partial t^{2}+\left( q+d+\left( -p+d+1\right)
t\right) \partial t\right] v=n\left( n-p+d\right) v+m\left( m+q+d-1\right)
t^{-1}g\left( t\right) Y_{l}^{m}\left( \xi \right) .
\end{equation*}%
For $q=-1$, the term $m\left( m+q+d-1\right) $ in this equation gives the
eigenvalue of $Y_{l}^{m}$ for the operator $-\Delta _{0}$ and the last term reduces to
\begin{equation*}
m\left( m+d-2\right) t^{-1}g\left( t\right) Y_{l}^{m}=-t^{-1}g\left(
t\right) \Delta _{0}^{\left( x\right) }Y_{l}^{m}=-t^{-1}\Delta _{0}^{\left(
x\right) }S_{n,m,l}^{p,q,M}.
\end{equation*}%
Thus, the proof is completed.
\end{proof}
\end{theorem}

\begin{remark}
We notice that the polynomial $S_{n,m,l}^{p,q,M}\left( x,t\right) $
satisfies a differential equation when $q\neq -1,$ but the equation depends
on both $m$ and $n.$
\end{remark}

Similar to Theorem \ref{limit-cone}, we present a limit relation between the
finite polynomials $S_{n,m,l}^{p,q,M}\left( x,t\right) $ on the surface of
the cone and Laguerre polynomials $S_{n,m,l}^{q,L}\left( x,t\right) $ on the
surface of the cone using the relation (\ref{limit}) as follows:

\begin{theorem}
The limit relation between $S_{n,m,l}^{p,q,M}\left( x,t\right) $ and $%
S_{n,m,l}^{q,L}\left( x,t\right) $ is given by%
\begin{equation*}
\underset{p\rightarrow \infty }{\lim }S_{n,m,l}^{p,q,M}\left( x,\frac{t}{p}%
\right) =\left( -1\right) ^{n-m}\left( n-m\right) !L_{n-m}^{q+2m+d-1}\left(
t\right) Y_{l}^{m}\left( x\right) =\left( -1\right) ^{n-m}\left( n-m\right)
!S_{n,m,l}^{q,L}\left( x,t\right) .
\end{equation*}
\end{theorem}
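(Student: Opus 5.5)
The plan is to reduce the statement to the one-variable limit (\ref{limit}), in the same way as in the proof of Theorem \ref{limit-cone}. The spherical harmonic factor $Y_{l}^{m}(x)$ in (\ref{Mpoly onsurface}) does not involve $t$, and the scaling $t\to t/p$ touches only the first argument of the univariate factor. So
\begin{equation*}
S_{n,m,l}^{p,q,M}\left( x,\frac{t}{p}\right) =M_{n-m}^{\left( p-2m-d+1,q+2m+d-1\right) }\left( \frac{t}{p}\right) Y_{l}^{m}\left( x\right),
\end{equation*}
and it suffices to compute the limit of the univariate factor as $p\to\infty$, with $n,m,d,q$ held fixed.

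For this, set $P=p-2m-d+1$ and $Q=q+2m+d-1$. Then $P+Q=p+q$, and $P\to\infty$ as $p\to\infty$. The argument, however, is $t/p$ rather than $t/P$. Following the proof of Theorem \ref{limit-cone}, I would work directly from the Rodrigues formula (\ref{M-Rod}). After rescaling the variable, this gives
\begin{equation*}
M_{n-m}^{(P,Q)}\left(\frac{t}{p}\right)=(-1)^{n-m}\frac{(1+t/p)^{p+q}}{t^{Q}}\frac{d^{n-m}}{dt^{n-m}}\left(t^{n-m+Q}(1+t/p)^{n-m-p-q}\right).
\end{equation*}
The factor $p^{n-m}$ coming from the chain rule cancels against the factor $p^{-(n-m)}$ coming from $(t/p)^{n-m+Q}/(t/p)^{Q}$. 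The exponents $p+q$ and $n-m-p-q$ involve $p$ exactly, so no mismatch between $P$ and $p$ actually appears.

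Next I would let $p\to\infty$. The derivative is a finite sum of terms, each a polynomial in $t$ multiplied by $(1+t/p)^{n-m-p-q-j}$ times powers of $p^{-1}$, so the limit can be passed inside the differentiation. Using $(1+t/p)^{\pm p+c}\to e^{\pm t}$, the limit becomes
\begin{equation*}
(-1)^{n-m}t^{-Q}e^{t}\frac{d^{n-m}}{dt^{n-m}}\left(t^{n-m+Q}e^{-t}\right)=(-1)^{n-m}(n-m)!\,L_{n-m}^{q+2m+d-1}(t),
\end{equation*}
by the Laguerre Rodrigues formula. Multiplying by $Y_{l}^{m}(x)$ and comparing with the definition of $S_{n,m,l}^{\beta,L}$ at $\beta=q$ gives both equalities of the statement.

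The main obstacle is justifying the interchange of the limit with the $(n-m)$-fold derivative. Alternatively, one could cite (\ref{limit}) directly, but that requires care because the parameter there is $P$ while the scaling is by $p$. I would handle the interchange by expanding with Leibniz's rule, which gives finitely many terms, each converging, so the limit is elementary. As a cross-check I would also use the explicit coefficient form visible in the samples: the coefficient of $t^{j}$ in $M_{n}^{(P,Q)}(t/p)$ behaves like $(P-n-j)_{j}\,p^{-j}$ times $Q$-dependent factors, and this tends to the corresponding Laguerre coefficient since $P/p\to1$.
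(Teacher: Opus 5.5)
Your proposal is correct and follows the route the paper intends. The paper gives no separate proof here and simply refers to the argument of Theorem \ref{limit-cone}: rewrite $M_{n-m}^{(p-2m-d+1,\,q+2m+d-1)}(t/p)$ with the Rodrigues formula (\ref{M-Rod}), pass to the limit to obtain the Laguerre Rodrigues formula, and carry $Y_{l}^{m}(x)$ along unchanged. Your two added points fill in details the paper leaves implicit: the identity $P+Q=p+q$ shows there is no mismatch between the parameter and the scaling, and Leibniz's rule justifies moving the limit inside the derivative.
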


\begin{remark}
Substituting $t\rightarrow \frac{t}{p}$ in Theorem \ref{equ-first} and then
taking the limit as $p\rightarrow \infty ,$ we obtain that%
\begin{equation*}
\left( t\partial _{t}^{2}+\left( d-1-t\right) \partial _{t}+t^{-1}\Delta
_{0}^{\left( x\right) }\right) S_{n,m,l}^{-1,L}\left( x,t\right)
=-nS_{n,m,l}^{-1,L}\left( x,t\right) ,
\end{equation*}%
which is the differential equation given by (\ref{sur-denk}) for the Laguerre polynomials $%
S_{n,m,l}^{-1,L}\left( x,t\right) $ on the surface of the cone when $q=-1.$
\end{remark}

\subsection{Second finite class on the surface of the cone}

If we choose the weight function $w_{p}\left( t\right) =t^{-p}e^{-\frac{1}{t}%
}$, $p>d,$ then the inner product becomes
\begin{equation*}
\left \langle f,g\right \rangle _{p}=b_{p}\int_{\mathbb{V}_{0}^{d+1}}f\left(
x,t\right) g\left( x,t\right) t^{-p}e^{-\frac{1}{t}}\mathrm{d}\sigma \left(
x,t\right)
\end{equation*}%
on the surface of the cone $\mathbb{V}_{0}^{d+1}$ where its normalization constant is given by
\begin{equation*}
b_{p}=\frac{1}{\varpi _{d}}\frac{1}{\int \limits_{0}^{\infty }t^{d-p-1}e^{-%
\frac{1}{t}}\mathrm{d}t}=\frac{1}{\varpi _{d}}\frac{1}{\Gamma \left(
p-d\right) }
\end{equation*}%
such that $\left \langle 1,1\right \rangle _{p}=1$ where $\varpi _{d}$ is
the surface area of $\mathbb{S}^{d-1}.$

Similar to the first class, we can express a basis of the $\mathcal{V}%
_{n}^{d}\left( \mathbb{V}_{0}^{d+1},w_{p}\right) $ in terms of the finite
orthogonal polynomials $N_{n}^{\left( p\right) }$ and spherical harmonics in
the next proposition.

\begin{proposition}
For $m=0,1,\ldots $ let $\left \{ Y_{l}^{m}:1\leq l\leq \dim \mathcal{H}%
_{m}^{d}\right \} $ denote an orthonormal basis of $\mathcal{H}_{m}^{d}$.
For $0\leq m\leq n,$ $1\leq l\leq \dim \mathcal{H}_{m}^{d},$ define
\begin{equation}
S_{n,m,l}^{p,N}\left( x,t\right) =N_{n-m}^{\left( p-2m-d+1\right) }\left(
t\right) Y_{l}^{m}\left( x\right) ,\text{\quad }p>2N+d,\text{ }N=\max \left
\{ n,n^{\prime }\right \} .  \label{Defsur1}
\end{equation}%
Then the family $\left \{ S_{n,m,l}^{p,N}:0\leq m\leq n,\text{ }1\leq l\leq
\dim \mathcal{H}_{m}^{d}\right \} $ forms an orthogonal basis of $\mathcal{V}%
_{n}^{d}\left( \mathbb{V}_{0}^{d+1},w_{p}\right) $ for $p>2N+d,$ $N=\max
\left \{ n,n^{\prime }\right \} .$ Indeed, we have
\begin{equation*}
\left \langle S_{n,m,l}^{p,N},S_{n^{\prime },m^{\prime },l^{\prime }}^{\text{
}p,N}\right \rangle _{p}=\frac{\left( n-m\right) !\left( p-n-m-d\right) !}{%
\left( p-2n-d\right) \Gamma \left( p-d\right) }\delta _{n,n^{\prime }}\delta
_{m,m^{\prime }}\delta _{l,l^{\prime }}.
\end{equation*}
\end{proposition}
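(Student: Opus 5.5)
The plan is to mimic the proof of the preceding proposition for $S_{n,m,l}^{p,q,M}$, replacing the univariate family $M_n^{(p,q)}$ by $N_n^{(p)}$ and its orthogonality data (\ref{Def:normcons N}), (\ref{normN}).

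\emph{Step 1: reduce to a one-variable integral.} Substitute $x=t\xi$ with $\xi\in\mathbb{S}^{d-1}$, so that $\mathrm{d}\sigma(x,t)=t^{d-1}\mathrm{d}\sigma(\xi)\,\mathrm{d}t$. On the surface $\|x\|=t$, homogeneity gives $Y_l^m(x)=t^mY_l^m(\xi)$. The inner product then factors as
\begin{equation*}
\frac{1}{\Gamma(p-d)}\int_0^\infty N_{n-m}^{(p-2m-d+1)}(t)\,N_{n'-m'}^{(p-2m'-d+1)}(t)\,t^{m+m'+d-1-p}e^{-1/t}\,\mathrm{d}t\;\times\;\frac{1}{\varpi_d}\int_{\mathbb{S}^{d-1}}Y_l^m(\xi)Y_{l'}^{m'}(\xi)\,\mathrm{d}\sigma(\xi).
\end{equation*}
The spherical factor equals $\delta_{m,m'}\delta_{l,l'}$ by orthonormality of the spherical harmonics.

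\emph{Step 2: evaluate the radial integral.} Once $m=m'$, set $P=p-2m-d+1$. Then $t^{2m+d-1-p}=t^{-P}$, so the radial integral is exactly the finite orthogonality integral for $N^{(P)}_{n-m}$ and $N^{(P)}_{n'-m}$ with weight $w_P$. That orthogonality requires $P>2\max\{n-m,n'-m\}+1$, which is equivalent to $p>2N+d$. This is precisely the hypothesis, and it holds uniformly in $m$.

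The integral therefore equals $\delta_{n,n'}\,h_{n-m}^{(P)}/c_P=\Gamma(P-1)\,h_{n-m}^{(P)}\,\delta_{n,n'}$. Inserting (\ref{normN}), the factor $\Gamma(P-1)$ cancels, leaving
\begin{equation*}
\frac{(n-m)!\,(P-n+m-1)!}{P-2(n-m)-1}=\frac{(n-m)!\,(p-n-m-d)!}{p-2n-d}.
\end{equation*}
Dividing by $\Gamma(p-d)$ gives the stated norm.

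\emph{Step 3: basis property.} Each $S_{n,m,l}^{p,N}$ is a polynomial of degree exactly $n$: its $t$-factor has degree $n-m$ and $Y_l^m$ is homogeneous of degree $m$. Modulo the ideal $\langle\|x\|^2-t^2\rangle$, every polynomial of degree at most $n$ reduces to a combination of $t^jY(x)$ with $Y$ harmonic and $j+\deg Y\le n$. Hence the $S_{n',m,l}^{p,N}$ with $n'<n$ span the polynomials of degree less than $n$ in the quotient, and orthogonality to them places $S_{n,m,l}^{p,N}$ in $\mathcal{V}_n^d(\mathbb{V}_0^{d+1},w_p)$.

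The count $\sum_{m=0}^n\dim\mathcal{H}_m^d=\dim\mathcal{P}_n^d+\dim\mathcal{P}_{n-1}^d=\binom{n+d-1}{n}+\binom{n+d-2}{n-1}$ matches $\dim\mathcal{V}_n(\mathbb{V}_0^{d+1},w)$. Mutual orthogonality with nonzero norms then gives linear independence, so the family is a basis.

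The only delicate point is the parameter bookkeeping in Step 2. One must check that the power of $t$ matches $w_P$ exactly, and that the finite-orthogonality restriction translates into $p>2N+d$ simultaneously for every $m\le n$.
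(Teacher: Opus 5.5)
Your proof is correct and follows the same route the paper uses for the companion family $S_{n,m,l}^{p,q,M}$; the paper states this proposition without a separate proof. As there, you separate variables via $x=t\xi$, use orthonormality of the $Y_l^m$, and apply the univariate finite orthogonality of $N_{n-m}^{(P)}$ with $P=p-2m-d+1$. Your bookkeeping of the exponent, of the condition $p>2N+d$ and of the norm is right, and your Step 3 (spanning modulo $\langle \|x\|^2-t^2\rangle$ plus the dimension count) supplies the basis claim that the paper leaves implicit.
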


As in the case of the solid cone, this family of finite orthogonal
polynomials on the surface of the cone satisfies the following theorem.

\begin{theorem}
Let $d\geq 2.$ $S_{n,m,l}^{p,N}\in \mathcal{V}_{n}^{d}\left( \mathbb{V}%
_{0}^{d+1},w_{p}\right) $ satisfies the difference-differential equation%
\begin{eqnarray*}
&&t^{2}\frac{\partial ^{2}}{\partial t^{2}}\left[ S_{n,m,l}^{p,N}\right]
+\left( 1-p+d\right) t\frac{\partial }{\partial t}\left[ S_{n,m,l}^{p,N}%
\right] \\
&=&n\left( n-p+d\right) S_{n,m,l}^{p,N}+\left( m-n\right) \left(
p-n-m-d\right) S_{n-1,m,l}^{p-2,N},
\end{eqnarray*}
\end{theorem}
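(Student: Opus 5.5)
The plan is to verify the equation directly on the basis element and then compute one variable at a time, as in the proof of Theorem \ref{equ-first}. I read the $t$-derivatives as taken with $\xi=x/t\in\mathbb{S}^{d-1}$ held fixed, which is the natural reading on $\mathbb{V}_{0}^{d+1}$ and the one used in Theorem \ref{equ-first}. Writing $x=t\xi$ and using the homogeneity of $Y_{l}^{m}$, we have
\begin{equation*}
S_{n,m,l}^{p,N}(x,t)=g(t)\,Y_{l}^{m}(\xi),\qquad g(t)=t^{m}f(t),\qquad f(t)=N_{n-m}^{(P)}(t),\qquad P=p-2m-d+1 .
\end{equation*}
With this reading the operator acts only on $g$, so the whole statement reduces to a one-variable identity for $g$.

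First I differentiate the product $g=t^{m}f$:
\begin{equation*}
t^{2}g''+(1-p+d)tg'=t^{m}\Big[t^{2}f''+(2m+1-p+d)tf'+\big(m(m-1)+m(1-p+d)\big)f\Big].
\end{equation*}
The key observation is that $2m+1-p+d=2-P$. Hence the first two terms inside the bracket are exactly the leading part of (\ref{Dif-N}) with $p$ replaced by $P$ and $n$ by $n-m$, except that the constant $+1$ in the coefficient of $f'$ is missing. Therefore (\ref{Dif-N}) gives
\begin{equation*}
t^{2}f''+(2-P)tf'=(n-m)(n-m+1-P)f-f'.
\end{equation*}
This leftover $-f'$ is precisely why the result is a difference-differential equation rather than a second order eigenvalue equation.

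Next I collect the coefficient of $f$. Since $n-m+1-P=n+m-p+d$, the coefficient is
\begin{equation*}
(n-m)(n+m-p+d)+m(m-p+d)=n^{2}-n(p-d)=n(n-p+d).
\end{equation*}
This is an elementary expansion of the same kind as (\ref{iden}). Multiplying back by $Y_{l}^{m}(\xi)$ produces the term $n(n-p+d)S_{n,m,l}^{p,N}$.

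Finally I handle the term $-t^{m}f'(t)Y_{l}^{m}(\xi)$ using the derivative relation (\ref{Nderrel}):
\begin{equation*}
f'=(n-m)(P-n+m-1)N_{n-m-1}^{(P-2)},\qquad P-n+m-1=p-n-m-d.
\end{equation*}
Since $P-2=(p-2)-2m-d+1$, the product $N_{n-m-1}^{(P-2)}(t)\,t^{m}Y_{l}^{m}(\xi)$ is exactly $S_{n-1,m,l}^{p-2,N}$ by (\ref{Defsur1}). The term therefore equals $(m-n)(p-n-m-d)S_{n-1,m,l}^{p-2,N}$, which completes the identity.

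There is no genuine analytic obstacle; the proof is pure bookkeeping. The points where an error would slip in are:
\begin{enumerate}[(i)]
\item correctly identifying the parameter shift $P\to P-2$ as the passage $p\to p-2$ in the family $S^{p,N}$, with $m$ and $l$ unchanged and $n\to n-1$;
\item the constant collection that yields $n(n-p+d)$;
\item keeping the sign of the $f'$ term, which becomes $(m-n)$.
\end{enumerate}
Note also that the Laplace--Beltrami operator does not appear here, unlike in Theorem \ref{equ-first}, because the operator involves no $x$-derivatives. Consequently the assumption $d\geq 2$ is only needed so that the spherical harmonics $Y_{l}^{m}$ make sense.
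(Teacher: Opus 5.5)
Your proposal is correct and follows essentially the same route as the paper. You write $S_{n,m,l}^{p,N}=t^{m}f(t)Y_{l}^{m}(\xi)$ with $f=N_{n-m}^{(p-2m-d+1)}$ and apply (\ref{Dif-N}). The leftover $-f'$ term is converted by (\ref{Nderrel}) into $(n-m)(p-n-m-d)N_{n-m-1}^{(p-2m-d-1)}$, and $t^{m}N_{n-m-1}^{(p-2m-d-1)}(t)Y_{l}^{m}(\xi)$ is recognized as $S_{n-1,m,l}^{p-2,N}$. The only difference is the order of steps: you expand $g=t^{m}f$ before invoking the ODE, while the paper rewrites the ODE for $f$ first. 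Your constant bookkeeping $(n-m)(n+m-p+d)+m(m-p+d)=n(n-p+d)$ checks out.
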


\begin{proof}
We derive the result for the function $S_{n,m,l}^{p,N}(x,t)$ defined in (\ref%
{Defsur1}). By applying the substitution $x=\xi t$ the polynomial $%
S_{n,m,l}^{p,N}(x,t)$ can be decomposed into the product $f\left( t\right)
t^{m}Y_{l}^{m}\left( \xi \right) $ where $f\left( t\right) =N_{n-m}^{\left(
p-2m-d+1\right) }\left( t\right) .$ From the differential equation (\ref%
{Dif-N}), the polynomial $f\left( t\right) $ satisfies%
\begin{equation*}
t^{2}f^{\prime \prime }\left( t\right) +\left( \left( 1-p+2m+d\right)
t+1\right) f^{\prime }\left( t\right) -\left( n-m\right) \left(
n+m-p+d\right) f\left( t\right) =0.
\end{equation*}
Using the recurrence relation (\ref{Nderrel}) of univariate finite
polynomial $N_{n}^{\left( p\right) }\left( t\right) ,$ we can write this
equation as follows
\begin{eqnarray*}
&&t^{2}f^{\prime \prime }\left( t\right) +\left( 1-p+2m+d\right) tf^{\prime
}\left( t\right) +\left( n-m\right) \left( p-n-m-d\right) N_{n-m-1}^{\left(
p-2m-d-1\right) }\left( t\right) \\
&=&\left( n-m\right) \left( n+m-p+d\right) f\left( t\right) ,
\end{eqnarray*}%
from which the polynomial $g\left( t\right) =f\left( t\right) t^{m}$
satisfies
\begin{equation*}
t^{2}g^{\prime \prime }\left( t\right) +\left( 1-p+d\right) tg^{\prime
}\left( t\right) =n\left( n-p+d\right) g\left( t\right) -\left( n-m\right)
\left( p-n-m-d\right) N_{n-m-1}^{\left( p-2m-d-1\right) }\left( t\right)
t^{m}.
\end{equation*}
The obtained equation is also valid for $S_{n,m,l}^{p,N}\left( x,t\right)
=g\left( t\right) Y_{l}^{m}\left( \xi \right) $
\begin{eqnarray*}
t^{2}\frac{\partial ^{2}}{\partial t^{2}}\left[ S_{n,m,l}^{p,N}\right]
+\left( 1-p+d\right) t\frac{\partial }{\partial t}\left[ S_{n,m,l}^{p,N}%
\right] &=&n\left( n-p+d\right) S_{n,m,l}^{p,N} \\
&&-\left( n-m\right) \left( p-n-m-d\right) \left[ N_{n-m-1}^{\left(
p-2m-d-1\right) }\left( t\right) t^{m}Y_{l}^{m}\left( \xi \right) \right] .
\end{eqnarray*}%
From the definition of $S_{n,m,l}^{p,N}\left( x,t\right) ,$ we obtain that%
\begin{eqnarray*}
&&t^{2}\frac{\partial ^{2}}{\partial t^{2}}\left[ S_{n,m,l}^{p,N}\right]
+\left( 1-p+d\right) t\frac{\partial }{\partial t}\left[ S_{n,m,l}^{p,N}%
\right] \\
&=&n\left( n-p+d\right) S_{n,m,l}^{p,N}-\left( n-m\right) \left(
p-n-m-d\right) S_{n-1,m,l}^{p-2,N},
\end{eqnarray*}
which completes the proof.
\end{proof}

\textbf{Acknowledgements}
Not applicable.\\

\textbf{Authors' contributions}
All authors contributed equally to this work. All authors have read and approved the final manuscript.\\

\textbf{Funding} No funding.\\

\textbf{Data availability}
Data sharing is not applicable to this article as no data sets were generated or analyzed during the current study.\\

\section*{Declarations}
\textbf{Conflict of interest} The authors declare no competing interests.\\

\textbf{Ethical Approval} Not applicable.


\begin{thebibliography}{99}
\bibitem{Bochner} S. Bochner, Uber Sturm-Liouvillesche Polynomsysteme, Math.
Z., 29 (1929), 730--736.

\bibitem{DaiX} F. Dai and Y. Xu, Approximation Theory and Harmonic Analysis
on Spheres and Balls\textit{. }Spinger Monographs in Mathematics. Springer,
New York, 2013.

\bibitem{DX} C. Dunkl and Y. Xu, Orthogonal polynomials of several
variables. Encyclopedia of Mathematics and its Applications, 155. Cambridge
Univ. Press, Cambridge, 2014.

\bibitem{E1} E. G\"{u}ldo\u{g}an Lekesiz, Finite biorthogonal M matrix polynomials,
Comput. Math. Math. Phys., 4 (66) (2026).

\bibitem{GA} E. G\"{u}ldo\u{g}an and R. Akta\c{s}, Some limit relationships
between some two-variable finite and infinite sequences of orthogonal
polynomials, J. Difference Equ. Appl., 27 (12) (2021), 1692-1722.

\bibitem{EA} E. G\"{u}ldo\u{g}an Lekesiz and I. Area, Some new families of
finite orthogonal polynomials in two variables, Axioms 12 (10) (2023), 932.

\bibitem{GAM1} E. G\"{u}ldo\u{g}an, R. Akta\c{s} and M. Masjed-Jamei, On
finite classes of two-variable orthogonal polynomials, Bull.Iran. Math.
Soc., 46 (2020), 1163--1194.

\bibitem{GAM2} E. G\"{u}ldo\u{g}an, R. Akta\c{s} and M. Masjed-Jamei,
Fourier transforms of some finite bivariate orthogonal polynomials,
Symmetry, 13 (2021), 452.

\bibitem{GCO} E. G\"{u}ldo\u{g}an Lekesiz, B. \c{C}ekim and M.A. \"{O}%
zarslan, Finite bivariate biorthogonal N-Konhauser polynomials, J.
Difference Equ. Appl., 31 (9) (2025), 1237-1262.

\bibitem{M1} M. Masjed Jamei, Three finite classes of hypergeometric
orthogonal polynomials and their application in functions approximation,
Integr. Trans. Spec. Funct., 13 (2) (2002), 169-190.

\bibitem{M2} M. Masjed-Jamei, Classical orthogonal polynomials with weight
function $\left( \left( ax+b\right) ^{2}+\left( cx+d\right) ^{2}\right)
^{-p}\exp \left( q\arctan \left( \frac{ax+b}{cx+d}\right) \right) ;~-\infty
<x<\infty $ and a generalization of T and F distributions. Integr. Trans.
Spec. Funct. 15(2) (2004), 137--153.

\bibitem{Tez} M. Masjed-Jamei, Some New Classes of Orthogonal Polynomials and Special Functions: A Symmetric Generalization of Sturm-Liouville Problems and its Consequences, Ph.D thesis, University of Kassel, Germany, 2006.

\bibitem{MSA} M. Masjed-Jamei, F. Soleyman, I. Area and J.J. Nieto, Two
finite q-Sturm liouville problems and their orthogonal polynomial solutions,
Filomat, 32(1) (2018), 231--244.

\bibitem{PX} P. Petrushev and Y. Xu, Localized polynomial frames on the
ball, Constr. Approx., 27 (2008), Paper No. 121, 148 pp.

\bibitem{PX2} M. Pinar and Y. Xu, Best polynomial approximation on the unit
ball, IMA J. Numer. Anal. 38 (2018), 1209-1228.

\bibitem {25}E. D. Rainville, Special functions, 1st ed. Chelsea Publishing Co., Bronx, N.Y., 1971.

\bibitem{SMA} F. Soleyman, M. Masjed-Jamei and I. Area, A finite class of
q-orthogonal polynomials corresponding to inverse gamma distribution, Anal.
Math. Phys., 7 (2017), 479--492.

\bibitem{Sz} G. Szeg\H{o}, Orthogonal polynomials. American Mathematical
Society Colloquium Publication 23, American Mathematical Society,
Providence, RI, 1975.

\bibitem{X1} Y. Xu, Orthogonal polynomials for a family of product weight
functions on the spheres, Canadian J. Math., 49 (1997), 175-192.

\bibitem{X2} Y. Xu, Orthogonal polynomials and cubature formulae on the
balls, simplices, and spheres, J. Comp. Appl. Math., 127 (2001), 349-368.

\bibitem{X3} Y. Xu, Orthogonal polynomials and Fourier orthogonal series on
a cone, J. Fourier Anal. Appl., 26 (2020), Paper No. 36, 42 pp.

\bibitem{X5} Y. Xu, Laguerre expansions on conic domains, J. Fourier Anal.
Appl. 27 (4) (2021), Paper No. 64, 36 pp.

\bibitem{X6} Y. Xu, Orthogonal structure and orthogonal series in and on a
double cone or a hyperboloid, Trans. Amer. Math. Soc., 374 (2021), 3603-3657.

\bibitem{X7} Y. Xu, Fourier orthogonal series on a paraboloid, J. d'Analyse
Math. 149 (2023), 251-279.

\bibitem{X8} Y. Xu, Orthogonal polynomials on domains of revolution, Studies
in Applied Math. 153 (2) (2024), Paper No. e12703, 38 pp.

\bibitem{X9} Y. Xu, Approximation and orthogonality on fully symmetric
domains, Studies in Applied Math. no. 1, Paper No. e70171, 28 pp.
\end{thebibliography}
\end{document}